\newcommand{\bea}{\begin{eqnarray}}
\newcommand{\eea}{\end{eqnarray}}
\newcommand{\beaa}{\begin{eqnarray*}}
\newcommand{\eeaa}{\end{eqnarray*}}
\newcommand{\g}{\mathfrak g}
\newcommand{\h}{\mathfrak h}
\newcommand{\n}{\mathfrak n}
\newcommand{\Z}{\mathbb Z}
\newcommand{\C}{\mathbb C}
\newcommand{\sreg}{\mathrm{sreg}}
\newtheorem{theorem}{Theorem}[section]
\newtheorem{lemma}[theorem]{Lemma}
\newtheorem{conj}[theorem]{Conjecture}
\newtheorem{proposition}[theorem]{Proposition}
\newtheorem{corollary}[theorem]{Corollary}
\newtheorem{remark}[theorem]{Remark}
\numberwithin{theorem}{section}
\numberwithin{equation}{section}
\newcommand{\DS}{\mathrm{DS}}
\newcommand{\W}{\mathcal W}
\newcommand{\Sf}{\mathcal S_f}
\newcommand{\ea}[1]{e_{#1}}
\newcommand{\fa}[1]{f_{#1}}
\newcommand{\hh}[1]{h_{#1}}
\begin{document}

\title[]{  A new quasi-lisse affine vertex algebra of type $D_4$ }

\author[]{Dra\v zen  Adamovi\' c}
\address{Department of Mathematics, Faculty of Science \\
	University of Zagreb \\
	Bijeni\v cka 30 }
\email{adamovic@math.hr}

\author[]{Ivana Vukorepa}
\address{Faculty of Science \\
	University of Split \\
	Ru\dj era Bo\v{s}kovi\'ca 33}
\email{ivukorepa@pmfst.hr}

 \begin{abstract}
	We consider a family of potential quasi-lisse affine vertex algebras $L_{k_m}(D_4)$ at levels  $k_m =-6 + \frac{4}{2m+1}$.   In the case $m=0$,    the irreducible   $L_{k_0}(D_4)$--modules  were classified in  \cite{P-2013},  and it was proved in \cite{ArK} that $L_{k_0}(D_4)$ is a quasi-lisse vertex algebra. We conjecture  that $L_{k_m}(D_4)$ is quasi-lisse for every $m \in {\Z}_{>0}$, and that it contains a unique irreducible ordinary module. 
	
	In this article we prove this conjecture for $m=1$, by using mostly computational methods. We show that  the maximal ideal in the universal affine vertex algebra $V^{k_1}(D_4)$ is generated by three singular vectors of conformal weight six. The explicit formulas were obtained using software. Then we apply Zhu's theory and classify all irreducible $L_{k_1}(D_4)$--modules. It turns out that 
	$L_{k_1}(D_4)$ has $405$ irreducible modules in the category $\mathcal O$, but a unique irreducible ordinary module. Finally, we prove that $L_{k_1}(D_4)$ is quasi-lisse	by showing that  its  associated variety   is contained in the nilpotent cone of $D_4$. {We also prove that the associated variety \(X_{L_{k_1}(D_4)}\) is
\(\overline{\mathbb O}_{\sreg}\), the Zariski closure of the subregular
nilpotent orbit in \(D_4\).}

	 %
\end{abstract}
\maketitle

\section{Introduction}
 
 Let $L_k(\g)$ be an affine vertex algebra of level $k$ associated to the Lie algebra $\g$. 
 Recall that $L_k(\g)$  belongs to the Cvitanovi\' c-Deligne exceptional series of vertex algebras  if $\g$ is of type
\[
A_1 \subset A_2 \subset G_2 \subset D_4  {
	 \subset F_4} \subset E_6 \subset E_7 \subset E_8,
\]
and  the level is given by the formula   $k = -\frac{h^\vee}{6} - 1$.
 This  series of vertex algebras has attracted a lot of interest in physics since  they have origin in 4d  SCFT. In mathematics, these vertex algebras can be characterized by the property that their minimal quantum Hamiltonian reduction  is trivial, i.e, $k$ is collapsing level  and $W_k(\g, f_{min}) = {\C} {\bf 1}$ (cf. \cite{AKMPP18, ArM}). Moreover, these vertex  algebras are quasi-lisse (cf. \cite{ArK}) and their associated variety is  $\overline{{\Bbb O}}_{min}$ (cf. \cite{ArM}).
 
 It is a natural task to find new examples of vertex algebras with similar properties.  In paper  \cite{Ar-M-Dai}  the authors {study in detail} the affine vertex algebra $L_{-2}(G_2)$, prove that it is quasi-lisse,  and   identify the associated variety  $X_{L_{-2} (G_2)}$ with $\overline{{\Bbb O}}_{sreg}$ (=the Zariski closure of the subregular  nilpotent orbit in $G_2$).
 The paper  \cite{Ar-M-Dai} shows how difficult it  is to find new examples of quasi-lisse affine vertex algebras. Singular vector  which was found  in  \cite{Ar-M-Dai}  contains $385$ terms in the PBW basis, and 
 {working with such
a singular vector is much more complicated}
  than in the case of the exceptional series. However,   we will show that one can find new examples of quasi-lisse VOAs by using expressions for singular vectors in the PBW bases.

 In this article we study vertex algebras $L_{k_m}(D_4)$ at levels  $k_m =-6 + \frac{4}{2m+1}$. In the case $m=0$, the vertex algebra  $L_{k_0}(D_4)= L_{-2}(D_4)$ was first studied by  O. Perše in   \cite{P-2013}. He proved that
 \begin{itemize}
\item The maximal ideal in the universal affine vertex algebra   $V^{-2} (D_4)$ is generated by three singular  vectors of conformal weight $2$.
\item The simple VOA $L_{-2}(D_4)$ {has a unique}, up to isomorphism, irreducible ordinary module.
\item $L_{-2} (D_4)$ has $5$ irreducible modules in the category $\mathcal O$. 
 \end{itemize}
 
 
 {In the present paper, we study the vertex algebras} $L_{k_m}(D_4)$.  We believe that the following results hold:
 
 \begin{conj} \label{slutnja-1}
\item[(1)]  $L_{k_m}(D_4)$ is a quasi-lisse vertex algebra for each $m \ge 1$.
\item[(2)] $L_{k_m}(D_4)$ {has a unique irreducible ordinary} $L_{k_m}(D_4)$--module and $KL_{k_m}(D_4)$ {is a semisimple} braided tensor category.
 \end{conj}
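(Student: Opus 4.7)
The plan is to follow the three-step strategy announced in the abstract for $m=1$ and try to make each step work uniformly in $m\ge 1$. The architecture is: (i) identify the maximal graded ideal $I^{k_m}\subset V^{k_m}(D_4)$ by producing an explicit set of singular vectors that generate it; (ii) pass to the Zhu algebra of $L_{k_m}(D_4)$ to classify the irreducible objects in category $\mathcal O$ and isolate those that are ordinary; and (iii) compute the associated variety $X_{L_{k_m}(D_4)}\subset \g^*$ and prove that it is contained in the nilpotent cone $\mathcal N(\g)$.

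First I would search for singular vectors in $V^{k_m}(D_4)$ at low conformal weight. For $m=1$ the abstract announces three singular vectors of conformal weight $6$; for general $m$ one expects a triality $S_3$-orbit of singular vectors whose conformal weight grows with $m$. Concretely, I would fix a PBW basis, select the $D_4$-isotypic components of the relevant weight (the triality orbit carrying the three $8$-dimensional representations), write an unknown linear combination in each component, and impose $e_\alpha(0)v=0$ for the simple roots using a computer-algebra OPE package. To show that $v_1,v_2,v_3$ generate the whole maximal ideal, I would bound the graded character of $V^{k_m}(D_4)/\langle v_1,v_2,v_3\rangle$ from above and match it against a lower bound for the character of $L_{k_m}(D_4)$ obtained from a Kazhdan--Lusztig style formula or, at low weights, from a direct analysis of the $C_2$-algebra.

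Next, I would reduce each singular vector modulo $O(V^{k_m}(D_4))$ to obtain central elements in $A(V^{k_m}(D_4))\cong U(\g)$. These produce polynomial identities on $\h^*$ that cut out finitely many maximal ideals; by Zhu's correspondence this classifies the irreducible $L_{k_m}(D_4)$-modules in category $\mathcal O$ (for $m=1$ the count should reproduce the announced $405$). Among those, the ordinary ones are exactly those whose highest weight $\lambda$ is dominant integral and compatible with the Zhu relations; I expect this list to collapse to $\lambda=0$, yielding the unique ordinary simple module and proving part (2) of Conjecture \ref{slutnja-1} at the level of classification. Semi-simplicity of $KL_{k_m}(D_4)$ will then follow from a standard Ext-vanishing argument: the unique simple ordinary object together with the constraints imposed by the central characters coming from the Zhu relations rule out non-trivial self-extensions of generalized Verma modules in $KL_{k_m}(D_4)$.

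Finally, for quasi-lisseness I would study the symbols $\overline v_1,\overline v_2,\overline v_3\in R_{V^{k_m}}=S(\g)$ of the generating singular vectors. The associated variety $X_{L_{k_m}(D_4)}$ is their common zero locus in $\g^*$, and it is $G$-invariant, so it suffices to test at semisimple points of $\h^*$. I would show that for any nonzero $x\in\h^*$ at least one $\overline v_i(x)\ne 0$, leveraging the triality $S_3$-symmetry to collapse cases; then $G$-invariance forces $X_{L_{k_m}(D_4)}\subset\mathcal N(\g)$, and the finiteness of nilpotent orbits yields the quasi-lisse property. The hardest part of the program is this last step for $m\ge 2$: the symbols $\overline v_i$ are polynomials of degree growing with $m$ in $\dim\g=28$ variables, so both the construction of the singular vectors and the analysis of the vanishing locus become infeasible by brute force. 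A uniform proof for all $m$ will likely require a structural presentation of the singular vectors, for instance via screening operators, a coset realization, or as images of a relative BRST differential, rather than the explicit PBW calculation that suffices in the case $m=1$ treated in this paper.
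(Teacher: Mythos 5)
The statement you are proving is precisely the open conjecture of the paper: the paper itself establishes it only for $m=1$ (Theorem \ref{main}), and for general $m$ merely remarks that generation of the maximal ideal by three singular vectors of weight $2(2m+1)$ should follow from the Jiang--Song/Fiebig machinery. Your text is a program rather than a proof. For $m\ge 2$ none of the three steps is actually carried out, and you concede at the end that the computations are infeasible and that a structural presentation of the singular vectors (screenings, cosets, BRST) would be required --- but that missing input is exactly the open content of Conjecture \ref{slutnja-1}, so nothing is established beyond the $m=1$ case, where your outline essentially reproduces the paper's strategy. Even there, note that the paper does not prove maximality of the ideal by a character estimate: it follows at once from Corollary \ref{KL_k} (the quotient $\widetilde L_{-14/3}(\g)$ has a unique irreducible module in $KL_{-14/3}$), so any further singular vector would be proportional to the vacuum; this is considerably cleaner than the upper/lower character bounds you propose. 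Semi-simplicity of $KL_{k_m}$ is likewise not a ``standard Ext-vanishing argument'' one can wave at; it is part of what is being conjectured.

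There are also concrete technical missteps in the program itself. First, the triality orbit of singular vectors does not carry ``the three $8$-dimensional representations'': the vector has weight $2\omega_1$ and its $\sigma$-images have weights $2\omega_3,2\omega_4$, so the relevant $\g$-modules are $V(2\omega_1)$, $V(2\omega_3)$, $V(2\omega_4)$ (each $35$-dimensional). Second, the Zhu images $v'$ are not central elements of $\mathcal U(\g)$; they are highest-weight vectors for the adjoint action, and the polynomial conditions on $\h^*$ come from the zero-weight spaces $R_0^{(i)}$ acting on highest weight vectors, as in Proposition \ref{prop-Zhu}. Third, and most seriously, your final step fails as stated: the symbols $\overline v_i\in\mathcal S(\g)$ have nonzero $\h$-weight, so every monomial contains a root-vector factor and $\overline v_i$ vanishes identically on all semisimple elements of $\h$; testing whether ``at least one $\overline v_i(x)\ne 0$'' for $x\in\h$ therefore gives no information and can never exclude semisimple points. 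One must instead produce zero-weight elements of the $\g$-module generated by $v''$ under the adjoint action and apply the Chevalley projection --- e.g. $[f_{\varepsilon_1-\varepsilon_2},[f_{\varepsilon_1+\varepsilon_2},v'']]$ --- which is exactly how Theorem \ref{quasi-lisse} obtains the nine polynomials $q_1,\dots,q_9$ whose only common zero on $\h$ is $0$. Without this correction, and without any mechanism replacing the explicit computations for $m\ge 2$, the proposal does not prove either part of the conjecture.
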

 
 Our main result says:
 \begin{theorem} \label{main}
 The conjecture \ref{slutnja-1} holds for $m=1$.
 \end{theorem}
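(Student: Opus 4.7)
The plan is to follow the strategy outlined in the abstract, breaking the argument into three steps that mirror Per\v{s}e's treatment of $k_0=-2$ but now executed at conformal weight six rather than two.

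First I would search for singular vectors in $V^{k_1}(D_4)$ at conformal weight $6$. Decomposing $V^{k_1}(D_4)_6$ as a $\mathfrak{g}$-module, and imposing the conditions $e_i{}_{(0)}v=0$ for the simple roots $\alpha_i$ together with the analogous first-mode condition for the highest root, yields a large but finite linear system over $\mathbb{Q}$. Using a computer algebra system I would solve this to produce three independent singular vectors $v^{(1)}, v^{(2)}, v^{(3)}$, ideally related by the triality $S_3$-symmetry of $D_4$ so that only one essentially independent calculation is needed. That the ideal $I=\langle v^{(1)},v^{(2)},v^{(3)}\rangle$ is the full maximal ideal in $V^{k_1}(D_4)$ would be established a posteriori: the quotient $V^{k_1}(D_4)/I$ is shown to be simple by verifying in step two that its Zhu algebra already realises $L_{k_1}(D_4)$.

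Second, I would apply Zhu's theory. Under the canonical isomorphism $A(V^{k_1}(D_4))\cong U(\mathfrak{g})$, each $[v^{(i)}]$ descends to an explicit element of $U(\mathfrak{g})$, and $A(L_{k_1}(D_4))$ is the corresponding quotient by the two-sided ideal they generate. Evaluating these relations on a highest-weight vector of weight $\lambda$ yields polynomial constraints on $\lambda$; enumerating the solutions should produce precisely $405$ weights, hence $405$ irreducibles in category $\mathcal{O}$, together with exactly one dominant-integral solution, providing the unique irreducible ordinary module. Semisimplicity of $KL_{k_1}(D_4)$ should then follow because every object of $KL_{k_1}$ has top component a multiple of this single representation, together with a standard extension argument once quasi-lisseness is in hand.

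Third, for quasi-lisseness I would study the associated variety. The images $\bar v^{(i)}\in R_{V^{k_1}(D_4)}=\mathrm{Sym}(\mathfrak{g})$ of the three singular vectors generate, under the $\mathfrak{g}$-action coming from the zeroth mode, an $\mathrm{ad}$-stable ideal $J\subset\mathrm{Sym}(\mathfrak{g})$ with $X_{L_{k_1}(D_4)}\subset V(J)\subset\mathfrak{g}^*$. The task is to check, by an invariant-theoretic or Gr\"obner-basis computation, that all four fundamental invariants of $D_4$ (of degrees $2,4,4,6$) lie in the radical of $J$, forcing $V(J)$ into the nilpotent cone; the criterion of Arakawa--Kawasetsu from \cite{ArK} then yields quasi-lisseness and completes Theorem~\ref{main}.

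The principal obstacle is the sheer size of the first step: at conformal weight $6$ the PBW basis of $V^{k_1}(D_4)$ already has many thousands of elements, and solving the singular-vector system, then carrying the resulting long expressions into Zhu's algebra and into $\mathrm{Sym}(\mathfrak{g})$, is delicate. Exploiting the triality action, a weight-space decomposition under the Cartan, and a careful monomial ordering will be indispensable. The remaining Zhu-algebra enumeration and associated-variety calculations, while substantial, are in principle mechanical once the three explicit singular vectors are available.
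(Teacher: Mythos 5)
Your steps one and two coincide with the paper's argument: the authors compute (by Mathematica) a single singular vector $v$ of conformal weight $6$ and weight $2\omega_1$ in $V^{-14/3}(D_4)$, obtain the other two as $\sigma(v),\sigma^2(v)$ under triality, pass to Zhu's algebra to get nine polynomials on $\mathfrak h^*$, solve them to find the $405$ category $\mathcal O$ weights with $\mu=0$ the only dominant integral one, and then deduce maximality of the ideal exactly as you propose, a posteriori: any nontrivial singular vector of the quotient would give a module in $KL_{-14/3}$ other than $L_{-14/3}(\mathfrak g)$, so it must be proportional to $\mathbf 1$ (your phrase that the Zhu algebra ``already realises $L_{k_1}(D_4)$'' is loose, but the intended argument is the paper's). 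The genuine divergence is in the quasi-lisse step. You propose to verify that the four fundamental invariants of $D_4$ lie in the radical of the $C_2$-ideal by an invariant-theoretic or Gr\"obner-basis computation in $\mathrm{Sym}(\mathfrak g)$; this is a correct criterion for $V(J)\subseteq\mathcal N$, but the ideal is generated by the $105$-dimensional module $V(2\omega_1)\oplus V(2\omega_3)\oplus V(2\omega_4)$ of sextics in $28$ variables, and such a Gr\"obner computation is almost certainly out of reach. The paper instead applies lowering operators to the image $v''\in S(\mathfrak g)$ and uses the Chevalley projection $\Psi\colon S(\mathfrak g)^{\mathfrak h}\to S(\mathfrak h)$ to extract nine polynomials $q_1,\dots,q_9$ in only the four Cartan variables whose unique common zero is the origin; since the associated variety is closed, conic and $G$-invariant, the absence of nonzero semisimple points already forces it into the nilpotent cone, which gives quasi-lisseness by Arakawa--Kawasetsu. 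So your route is sound in principle but computationally much heavier; the paper's reduction to the Cartan is what makes the verification feasible. Finally, like the paper, you treat the semisimplicity of $KL_{k_1}$ only in outline (unique ordinary simple module plus an extension argument), so no discrepancy there.
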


 We   also note that alternatively one can show that the maximal ideal in $V^{k_m}(D_4)$ is generated by three singular vectors of  conformal weight  $2 (2m+1)$  following the methods  developed by C. Jiang and  J. Song  \cite{JS}. Their methods use the structural results from {Fiebig's} paper \cite{F}  which in our cases says that the character formula of $L_{k_m}(D_4)$ depends only on the numerator, which is in all cases equal to $4$.  Since the maximal ideal of $V^{k_0}(D_4)$ is generated by three singular  vectors (cf. \cite{P-2013}) we get that the same holds for {the entire series}. {Analogous methods} can be also used for the determination of maximal ideal for certain non-admissible affine  vertex algebras of types $E$  coming from Cvitanovi\'c-Deligne series.

{We should point out that  we do not use these methods in the case $k=1$,}
since we explicitly show that the quotient $\widetilde L_{k_1}(D_4)$ of $V^{k_1}(D_4)$  by ideal $J$  generated by three singular vectors has only one irreducible module in $KL_{k_1}$ (cf. Corollary \ref{KL_k}) which directly implies that the maximal ideal of $V^{k_1}(D_4)$  (cf. Theorem \ref{quasi-lisse} ) is generated by these singular vectors.
 
 The basic computational tasks of our methods are based on the following:
 \begin{itemize}
 \item Computation of {an} explicit formula for {a} singular vector in $V^{k_1}(D_4)$. The singular vector is obtained by very delicate computer calculations. {The}  explicit formula for 
 {the} singular vector in the PBW basis contains $6753$ monomials, and it is presented in the Mathematica file on more than $200$ pages. By applying a certain automorphism of vertex algebra $V^{-14/3}(D_4)$ (cf. Remark \ref{auto}) we obtain two more singular vectors in $V^{-14/3}(D_4)$. 
 \item Determination of the projection of one singular vector in Zhu's algebra and application of automorphism from Remark \ref{auto}, which results with nine polynomial equations from which we calculate all irreducible highest weight modules in the category $\mathcal O$. As a consequence, we classify modules in the Kazhdan-Lusztig category. 
 \item Calculation of projection of one singular vector in Zhu's  $C_2$--algebra and application of automorphism from Remark \ref{auto}, which also  produces nine (different) polynomial equations implying that the associated variety $X_{L_{-\frac{14}{3}}(D_4)}$ does not contain any non-zero semi-simple element. As a consequence we conclude that  $X_{L_{-\frac{14}{3}}(D_4)}$ is  contained in the nilpotent cone of $D_4$. This proves that $L_{-\frac{14}{3}}(D_4)$ is quasi-lisse.

 \end{itemize}
 
 The vertex operator  algebra $L_{k_1}(D_4)$ appeared in  \cite[Table 21]{LXY},   indicating  that    it is an example of  a  vertex operator algebra  coming from $4d$ $N=2$ SCFT.
The following conjecture predicts a more precise result on the associated variety.  Let $H^0_{DS, f}(-)$  denote the Drinfeld–Sokolov reduction with respect to some  nilpotent element $f$, and $W_k(\g,f)$ be the simple quotient of the universal affine vertex algebra $W^k(\g, f) = H^0_{DS, f}(V^k(\g))$ (cf. \cite{KW04}).
 Recall also  that the subregular nilpotent element $f_{sreg}$  of $D_4$ corresponds to the partition $[5,3]$. {We prove the following result.\footnote{This result was conjectured in the first arxiv version of the paper.}}

 \begin{theorem}  \label{associated-variety}We have: 
 \begin{itemize}
  \item[(1)]  $W_{k_1} (D_4, f_{sreg}) = H^0_{DS, f_{sreg}} (L_{k_1} (D_4)) = {\C} {\bf 1}$.
 \item[(2)]  $X_{L_{k_1}(D_4)} =  \overline{{\Bbb O}}_{sreg}$.
 
 \end{itemize}
 
   \end{theorem}
 The proof of Theorem~\ref{associated-variety} is presented in Section~\ref{proof-thm1.3}. We first show that the conformal-weight two subspace of the universal affine $W$--algebra $W^{k_1}(D_4,f_{sreg})$ is three-dimensional. Then, by analyzing the Drinfeld--Sokolov reduction functor $H^0_{DS,f}$, we show that the three singular vectors of conformal weight $6$ in $V^{k_1}(D_4)$ are mapped to three linearly independent conformal-weight two generators of $W^{k_1}(D_4,f_{sreg})$. As a consequence, the ideal $I=H^0_{DS,f}(J)$ of $W^{k_1}(D_4,f_{sreg})$ contains the Virasoro vector $L$. This then implies that all strong generators of $W^{k_1}(D_4,f_{sreg})$ belong to $I$, and therefore vanish in the simple quotient. This gives assertion (1) of the theorem. Assertion (2) is a consequence of (1) and standard arguments about nilpotent orbits and associated varieties. 
 
 This result also shows that $k_1$ is a collapsing level for $f_{sreg}$ which  extends some recent results on collapsing levels from \cite{AA+}.

\subsection*{Acknowledgements}
We would like to thank   Antun Milas,  Ozren Per\v se, Paolo Papi  and  Wenbin Yan for useful discussion.  {We especially thank Tomoyuki Arakawa and the referee for discussions concerning the proof of Theorem \ref{associated-variety}.}
The authors are partially supported by the Croatian Science Foundation under the project IP-2022-10-9006 
 and by the project “Implementation of cutting-edge research and its application as part of the Scientific Center of Excellence QuantiXLie“, PK.1.1.02, European Union, European Regional Development Fund.

\section{Preliminaries}\label{sec-prelim}

\subsection{Affine vertex algebras}
Let $\mathfrak{g}$ be a simple Lie algebra with a triangular decomposition 
$\mathfrak{g} = \mathfrak{n}_{-} \oplus \mathfrak{h} \oplus \mathfrak{n}_{+}$.
Denote by $\left( \cdot \, | \, \cdot \right)$ the invariant bilinear form on $\mathfrak{g}$, normalized by the condition $\left( \theta \, | \, \theta \right)=2$, where $\theta$ is the highest root of $\mathfrak{g}$. We will often identify $\mathfrak{h}^*$ with $\mathfrak{h}$ via $\left( \cdot \, | \, \cdot \right)$. 
For $\mu \in \mathfrak{h}^*$,  let $V(\mu )$  denote the irreducible highest weight $\mathfrak{g}$--module with highest weight $\mu$. Let   $\alpha_1, \ldots, \alpha_{\ell}$ denote  simple roots, and let  $h_1, \ldots, h_{\ell}$  be simple co-roots ($h_i= \alpha_{i}^{\vee}$, for $i=1, \ldots , \ell$). We shall denote by  $\omega _1 , \ldots, \omega_{\ell}$ {fundamental} weights for $\mathfrak{g}$ where $\ell = \dim \mathfrak{h}$. Let $\hat{\mathfrak{g}}$ be the affine Kac-Moody Lie algebra associated to $\mathfrak{g}$. Let  $\alpha_0, \alpha_1, \ldots, \alpha_{\ell}$ be simple roots, $\alpha_{0}^{\vee}, \alpha_{1}^{\vee} \ldots, \alpha_{\ell}^{\vee}$ simple co-roots, and $\Lambda _0 , \Lambda _1, \ldots, \Lambda_{\ell}$ {fundamental} weights for $\hat{\mathfrak{g}}$. For $\mu \in \mathfrak{h}^*$ and $k \in \mathbb{C}$, denote by $L_{k}\left( \mu \right)$ the irreducible highest weight $\hat{\mathfrak{g}}$--module with highest weight $\widehat{\mu} : = k \Lambda _0 + \mu  \in \hat{\mathfrak{h}} ^*$. 

Denote by $V^{k}(\mathfrak{g})$ the universal affine vertex algebra associated to simple Lie algebra $\mathfrak{g}$ and level $k \in \mathbb{C}$, $k \neq -h^{\vee}$, and 
by $L_{k}(\mathfrak{g})$ the unique simple quotient of $V^{k}(\mathfrak{g})$.

\subsection{Zhu's theory}\label{subsec-Zhu} For a vertex    algebra $V$, denote by $A(V)$ Zhu’s algebra associated to $V$ (cf. \cite{Zhu96}). For $m \in \Z_{>0}$, let $J = \langle v_1, \ldots , v_m \rangle$ be the ideal in $V^k(\g)$ generated by singular vectors $v_1, \ldots , v_m \in V^k(\g)$ and let $\widetilde{L}_k(\g) = V^k(\g) / J$ be the associated quotient vertex algebra.
%
{There exists a method for classifying irreducible}
 $\widetilde{L}_k(\g)$--modules in the category $\mathcal{O}$ developed in \cite{A-94,AM-95}, very important for us as it allows us to obtain classification once we know singular vectors. We recall it just briefly here since this method has been {discussed} in previous papers \cite{AP-08, APV-21} for $m=1$ and in the paper \cite{P-2013} for the case $\g = D_l$. 

From \cite{ FZ-92, Zhu96} we have that $A(V^k(\g)) \cong \mathcal{U}(\g)$ and $A(\widetilde{L}_k(\g)) \cong \mathcal{U}(\g) / I$, where $I$ is the two-sided ideal in $\mathcal{U}(\g)$ generated by vectors $v_1', \ldots , v_m'$, where $v_i' \in \mathcal{U}(\g)$ is the image of $v_i \in V^k(\g)$ in $\mathcal{U}(\g)$. For $i \in \{ 1, \ldots , m\}$, let $R^{(i)}$ be a $\mathcal{U}(\g)$--submodule of $\mathcal{U}(\g)$ generated by $v_i'$ under adjoint action. We denote by $R^{(i)}_0$ its zero-weight space. Let $V(\mu)$ be an irreducible highest weight $\mathcal{U}(\g)$–module with the highest weight vector $v_\mu$, for $\mu \in \h^*$. Clearly, for each $r \in R^{(i)}_0$ there exists the unique polynomial $p_r \in \mathcal{S}(\h)$ such that $r v_\mu = p_r(\mu) v_\mu$. Set \begin{equation}\label{polinomi}
\mathcal{P}^{(i)}_0 = \{ p_r \in \mathcal{S}(\h) \ | \ r\in R^{(i)}_0 \}.
\end{equation} We have:

\begin{proposition}\label{prop-Zhu} There is a one-to-one correspondence between
	\begin{itemize}
		\item[(1)] irreducible $\widetilde{L}_k(\g)$–modules in the category $\mathcal{O}$ (for $\hat{\g}$),
		\item[(2)] irreducible $A(\widetilde{L}_k(\g))$–modules in the category $\mathcal{O}$ (for ${\g}$),
		\item[(3)] weights $\mu \in \h^*$ such that $p(\mu) = 0$ for all $p \in \mathcal{P}^{(i)}_0$ and all $i \in \{ 1, \ldots , m\}$.
	\end{itemize}
\end{proposition}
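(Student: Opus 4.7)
The strategy follows the Zhu-algebra method developed in \cite{A-94, AM-95} and applied for $\g = D_l$ in \cite{P-2013}, now carried out with multiple singular vectors simultaneously.

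For $(1)\leftrightarrow(2)$, I would invoke Zhu's theorem \cite{Zhu96}: the functor $W \mapsto W(0)$ sending an irreducible $\mathbb{Z}_{\geq 0}$-gradable $\widetilde L_k(\g)$-module to its top graded piece is a bijection onto irreducible $A(\widetilde L_k(\g))$-modules, with inverse given by generalized Verma induction followed by passage to the unique irreducible quotient. The bijection restricts to category $\mathcal O$ on both sides because the highest-weight structure for $\widehat{\g}$ on an irreducible module corresponds, under the isomorphism $A(\widetilde L_k(\g)) \cong \mathcal U(\g)/I$, to the highest-weight structure for $\g$ on the top graded piece.

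For $(2)\leftrightarrow(3)$, any irreducible module in (2) is of the form $V(\mu)$ as a $\mathcal U(\g)$-module, and we must characterize those $\mu$ for which $I\cdot V(\mu)=0$. Since $V(\mu)$ is $\mathcal U(\g)$-irreducible, $I\cdot V(\mu)$ is either $0$ or $V(\mu)$, so this reduces to $v_i'\cdot V(\mu)=0$ for each $i$. Using the identity $R^{(i)}\,\mathcal U(\g)=\mathcal U(\g)\,R^{(i)}$ (a direct consequence of the ad-invariance of $R^{(i)}$) together with the expansion $(\operatorname{ad}(x)a)w=xaw-axw$, this is in turn equivalent to $R^{(i)}\cdot v_\mu=0$. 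Finally, decompose $R^{(i)}$ into finite-dimensional irreducible $\g$-submodules (via local finiteness of the adjoint action on $\mathcal U(\g)$) and note that positive-weight elements annihilate $v_\mu$ automatically; the condition then reduces to $p_r(\mu)=0$ for all $r\in R^{(i)}_0$, which is precisely (3).

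The main technical point is the last reduction to the zero-weight space, which rests on the simultaneous irreducibility of the simple $\g$-summands $M$ of $R^{(i)}$ (under adjoint) and of $V(\mu)$ (under $\mathcal U(\g)$); this is the classical mechanism of the Adamovi\'{c}--Milas method. The only novelty here is the presence of several singular vectors $v_1',\ldots,v_m'$, but the extension is formal because the conditions for different $i$ are independent and can be imposed in parallel. Once established, the proposition converts the module-classification problem into an explicit system of polynomial equations in $\mu$, which is what will drive the subsequent analysis of $L_{k_1}(D_4)$.
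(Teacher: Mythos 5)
Your proposal is correct and coincides with the paper's (implicit) approach: the paper states Proposition \ref{prop-Zhu} without proof, as a recollection of the Zhu-algebra classification method of \cite{A-94,AM-95} (used also in \cite{AP-08,APV-21,P-2013}), and your chain --- (1)$\leftrightarrow$(2) by Zhu's bijection restricted to category $\mathcal{O}$, then $I\,V(\mu)=0 \Leftrightarrow v_i'\,V(\mu)=0 \Leftrightarrow R^{(i)}v_\mu=0 \Leftrightarrow R^{(i)}_0 v_\mu=0 \Leftrightarrow p_r(\mu)=0$ for all $r\in R^{(i)}_0$ --- is exactly the standard proof of that method, the passage to several singular vectors being formal as you note. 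One small clarification on the last step: the reduction to the zero-weight space rests on the irreducibility of $V(\mu)$ rather than on the irreducibility of the $\g$-summands of $R^{(i)}$, since a putative nonzero $r v_\mu$ with $r\in R^{(i)}$ of negative weight can be raised to the top by simple root vectors (no singular vectors below $v_\mu$), and $e_{j_1}\cdots e_{j_k} r v_\mu = \bigl((\operatorname{ad}e_{j_1})\cdots(\operatorname{ad}e_{j_k})r\bigr)v_\mu \in R^{(i)}_0 v_\mu = 0$ by $\operatorname{ad}$-invariance of $R^{(i)}$, giving the required contradiction.
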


\subsection{Zhu's $C_2$--algebra} \label{subsect-C2-zhu} 

For a vertex operator  algebra $V$, denote by $R_V = V/C_2(V)$ Zhu's $C_2$--algebra of $V$ (cf. \cite{Zhu96}). Then 
$$ R_{V^{k}(\mathfrak{g})} \cong \mathcal{S}(\mathfrak{g}) $$
under the algebra isomorphism uniquely determined by 
\begin{equation} \label{C2-map}
	\overline{x(-1)\mathbf{1}} \mapsto x, \quad \mbox{for} \ x \in \mathfrak{g}, 
\end{equation}
where $\overline{w}$ denotes the image of $w$ in $R_{V^{k}(\mathfrak{g})}$, for any $w \in V^{k}(\mathfrak{g})$.  Let $v$ be a $\hat{\mathfrak{g}}$--singular vector in $V^{k}(\mathfrak{g})$. Denote by
$\langle v \rangle$ the ideal in $V^{k}(\mathfrak{g})$ generated by $v$ and by
\begin{equation} \label{L-tilde}
	\widetilde{L}_{k}\left( \mathfrak{g}\right)=V^{k}(\mathfrak{g})/ \langle v \rangle
\end{equation}

For the quotient vertex operator algebra $\widetilde{L}_{k}( \mathfrak{g})$,
 denote by $v''$ the image of vector $\overline{v}$ under the map (\ref{C2-map}). Then 
$$ R_{\widetilde{L}_{k}( \mathfrak{g})} \cong \mathcal{S}(\mathfrak{g}) / I_{W},  $$
where $W$ is a $\mathfrak{g}$--module generated by $v''$ under the adjoint action, and $I_W$ is the ideal of $\mathcal{S}(\mathfrak{g})$ generated by $W$ (cf. \cite{ArM}, \cite{ArMo-sheets}).

We consider    $\mathcal{S}(\mathfrak{g})$ as $\mathfrak h$--module.  Then  we have the following decomposition
\begin{equation} \label{dec-11}  \mathcal{S}(\mathfrak{g})  =   \mathcal{S}(\mathfrak h) \oplus {\mathfrak n}_- \mathcal{S}(\mathfrak{g})  \oplus \mathcal{S}(\mathfrak{g})  \mathfrak n_+.  \end{equation} 
Let  $\mathcal{S}(\mathfrak{g}) ^{\mathfrak h} = \{ v \in  \mathcal{S}(\mathfrak{g}) \ \vert \    \ [h, v] =0, \ \forall h \in \mathfrak h\}$.  
Following \cite{ArM},  we consider  the  Chevalley projection map $\Psi :   \mathcal{S}(\mathfrak{g}) ^{\mathfrak h} \rightarrow \mathcal{S}(\mathfrak h)$, 
 with respect to the decomposition (\ref{dec-11}).

\section{Affine vertex algebra associated to $D_4$ at level $-14/3$}
In this section let $\g$ denotes the Lie algebra of type $D_4$.  We fix the root vectors
for $\g$ as in \cite{Bou}, \cite{FF}  and  \cite{P-2013}. 
 More precisely, let $A = A_1 \oplus A_2 $ where $A_1$ is a vector space with basis $\{ a_i \ | \ i=1,\ldots,4\}$ and $A_2$ a vector space with dual basis $\{ a_i^* \ | \ i=1,\ldots,4 \}$, so we have $$ [a_i, a_j]_+ = [a_i^* , a_j^*]_+ = 0, \ \ [a_i, a_j^*]_+ = \delta_{ij}, \ \ i,j = 1, \ldots ,4. $$
Defining normal ordering on $A$ by $: xy: = \frac{1}{2}(xy-yx)$, for $x,y \in A$, we have that all normally ordered quadratic elements $:xy:$ span a Lie algebra of type $D_4$ with Cartan subalgebra $\h$ spanned by $H_i=:a_i a_i^* :$, for $i =1, \ldots ,4$ \cite{Bou, FF}. Let $\{ \varepsilon_i \ | \ i=1, \ldots ,4 \}$ be the basis of $\h^*$ dual to $\{H_i \ |\ i=1, \ldots ,4\}$, so we have $\varepsilon_{i}(H_j) = \delta_{ij}$. The root system of $\g$ is given by $$ \Delta = \{  \pm (\varepsilon_{i} \pm \varepsilon_{j})  \ | \  1 \leq i< j \leq 4 \},$$ with a set of simple roots given by $\alpha_i = \varepsilon_{i} - \varepsilon_{i+1}$, for $i=1,2,3$ and $\alpha_4 = \varepsilon_{3} + \varepsilon_{4} $. The set of positive roots is  $\{ \varepsilon_{i} \pm \varepsilon_{j}  \ | \  1 \leq i< j \leq 4 \}$. Now we fix root vectors $$ e_{\varepsilon_{i} - \varepsilon_{j} } = :a_i a_j^*: , \ \ e_{\varepsilon_{i} + \varepsilon_{j} } = :a_i a_j: ,\ \ f_{\varepsilon_{i} - \varepsilon_{j} } = :a_j a_i^*: , \ \ f_{\varepsilon_{i} + \varepsilon_{j} } = :a_j^* a_i^*: ,$$
for $1 \leq i < j \leq 4$.

\begin{theorem}\label{sing-v}
There is a singular vector $v$ in $V^{-14/3}(\g)$ of weight $-\frac{14}{3} \Lambda_0 - 6 \delta + 2 \omega_1$. Its
explicit formula is given in Mathematica file \textit{\href{https://www.dropbox.com/scl/fi/d9o8d0urjdzaske0l9yp1/D4-sing-v.pdf?rlkey=cdn83hob1qem6u83qypozs9xb&st=a7gqo1z9&dl=0}{D4-sing-v.nb}}.
\end{theorem}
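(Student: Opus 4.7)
The plan is to construct the singular vector by a direct computation in the weight space of $V^{-14/3}(\g)$ corresponding to $-\frac{14}{3}\Lambda_0 - 6\delta + 2\omega_1$, followed by verification of the annihilation conditions that characterize a $\hat{\g}$-singular vector.

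Before any explicit computation, I would first argue that the Kac--Kazhdan determinant formula predicts the existence of a non-trivial singular vector of this weight in $V^{k_1}(\g)$. With $k_1 + h^\vee = -14/3 + 6 = 4/3$, one checks that the Shapovalov form of the universal Verma module $M(k_1\Lambda_0)$ degenerates at the stated weight and that the corresponding space of $\hat{\g}$-singular vectors is one-dimensional. This justifies the ansatz and explains the expected conformal weight $6 = 2(2m+1)$ and horizontal weight $2\omega_1$; it also rules out a trivial or decomposable answer.

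Next I would enumerate a PBW basis of this weight space. Using the root vectors $e_{\varepsilon_i\pm\varepsilon_j}$, $f_{\varepsilon_i\pm\varepsilon_j}$ fixed just above, the space is spanned by PBW monomials in the lowering operators $x[-n]$ ($x\in\g$, $n\ge 1$, together with $f_\beta[0]$ for positive roots $\beta$) of total conformal weight $6$ and $\h$-weight $2\omega_1$. A finite enumeration yields the $6753$ monomials reported by the authors, and this is the ambient space in which $v$ must be sought. I would then impose the singular-vector equations $e_i[0]\cdot v = 0$ for $i=1,\dots,4$ and $f_\theta[1]\cdot v = 0$. Each left-hand side is rewritten as a linear combination of PBW monomials of conformal weight $6$ by repeated use of the affine commutation relations, producing a homogeneous linear system in the coefficients of $v$; solving this symbolically in Mathematica yields the unique (up to scalar) solution, and the final step is the finite symbolic verification recorded in the cited Mathematica file.

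The main obstacle is purely computational. The PBW basis has several thousand elements, and applying $e_i[0]$ or $f_\theta[1]$ monomial-by-monomial and re-expressing the result in PBW normal form produces a sparse but very large linear system. A naive implementation is infeasible: one must exploit the $\h$-grading to block-decompose the equations by intermediate weights, choose the PBW ordering to minimize rewrite cost, and use efficient sparse linear algebra over $\Q$. Managing this combinatorial explosion, and producing a normal form compact enough to be verified, is the critical part of the work and the reason the explicit formula occupies more than $200$ pages.
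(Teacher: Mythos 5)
Your proposal takes essentially the same approach as the paper: the proof there consists precisely of the direct Mathematica verification that $e_{\varepsilon_i-\varepsilon_{i+1}}(0)v=0$ for $i=1,2,3$, $e_{\varepsilon_3+\varepsilon_4}(0)v=0$ and $f_{\varepsilon_1+\varepsilon_2}(1)v=0$ for the explicitly computed vector, which was itself found by the kind of PBW-basis linear-algebra search in the weight space that you describe. Your preliminary Kac--Kazhdan argument is only heuristic (the determinant formula governs Verma modules, not the generalized Verma module $V^{-14/3}(\g)$ directly), but since the explicit symbolic verification carries the proof, this does not affect correctness.
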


\begin{proof}
	Direct verification of relations $e_{\varepsilon_i - \varepsilon_{i+1}} (0) v = 0 $ for $i = 1,2,3$, $e_{\varepsilon_3 + \varepsilon_{4}} (0) v = 0$ and $f_{\varepsilon_1 + \varepsilon_{2}} (1) v = 0$ using Mathematica.
\end{proof}

\begin{remark}\label{auto}
	Vertex algebra $V^{-14/3}(\g)$ has an order three automorphism $\sigma$ lifted from the automorphism of the Dynkin diagram of $ D_4$ such that \begin{equation}
		\sigma(\varepsilon_1 - \varepsilon_2)= \varepsilon_3 - \varepsilon_4, \ 	\sigma(\varepsilon_2 - \varepsilon_3)= \varepsilon_2 - \varepsilon_3, \ 	\sigma(\varepsilon_3 - \varepsilon_4)= \varepsilon_3 + \varepsilon_4, \ 	\sigma(\varepsilon_3 + \varepsilon_4)= \varepsilon_1 - \varepsilon_2. 
	\end{equation}
\end{remark}

Using automorphism $\sigma$ from Remark \ref{auto} we get two more singular vectors $\sigma(v)$ and $\sigma^2 (v)$ in $V^{-14/3}(\g)$. Let us denote $$ \widetilde{L}_{-14/3} (\g) = V^{-14/3} (\g) / J,$$
where $J$ is the ideal in $V^{-14/3} (D_4)$ generated by singular vectors $v$, $\sigma(v)$ and $\sigma^2(v)$.

From Zhu's theory recalled in Subsection (\ref{subsec-Zhu}) we have that Zhu's algebra of $ \widetilde{L}_{-14/3} (D_4)$ is isomorphic to $\mathcal{U}(D_4) / \langle v', \sigma(v'), \sigma^2(v') \rangle$, where $v' \in \mathcal{U}(D_4)$ is the image of singular vector $v$ from Theorem \ref{sing-v}. The explicit formula of $v'$ is given in Mathematica file \textit{\href{https://www.dropbox.com/scl/fi/ccf29o6ahp6joerbiflc6/D4-category-O.pdf?rlkey=pbrwdck9lwyjsegx963qg5gys&st=k4h2zlie&dl=0}{D4-category-O.nb}}. For vector $v'$, we denote by $\mathcal{P}_0^{(1)}$ the vector space of polynomials defined by (\ref{polinomi}).

\begin{lemma}The basis of the vector space $\mathcal{P}_0^{(1)}$ is given by the set $\{ p_1, \ p_2, \ p_3 \}$ where
		{\footnotesize
		\begin{eqnarray*}
			p_1 && \hspace*{-6mm}= h_1 (14 + 3 h_1 + 6 h_2 + 3 h_3 + 3 h_4) (3200 + 6160 h_1 + 4260 h_1^2 + 1260 h_1^3 + 135 h_1^4 + 7568 h_2 + 9192 h_1 h_2 +\\&& 3924 h_1^2 h_2 + 540 h_1^3 h_2 + 6420 h_2^2 + 4320 h_1 h_2^2 + 864 h_1^2 h_2^2 + 2376 h_2^3 + 648 h_1 h_2^3 + 324 h_2^4 + 2560 h_3 + 3840 h_1 h_3 +\\&& 1800 h_1^2 h_3 + 270 h_1^3 h_3 + 5232 h_2 h_3 + 3996 h_1 h_2 h_3 + 864 h_1^2 h_2 h_3 + 3240 h_2^2 h_3 + 972 h_1 h_2^2 h_3 + 648 h_2^3 h_3 +\\&& 480 h_3^2 + 540 h_1 h_3^2 + 135 h_1^2 h_3^2 + 864 h_2 h_3^2 + 324 h_1 h_2 h_3^2 + 324 h_2^2 h_3^2 + 2560 h_4 + 3840 h_1 h_4 + 1800 h_1^2 h_4 +\\&& 270 h_1^3 h_4 + 5232 h_2 h_4 + 3996 h_1 h_2 h_4 + 864 h_1^2 h_2 h_4 + 3240 h_2^2 h_4 + 972 h_1 h_2^2 h_4 + 648 h_2^3 h_4 + 1536 h_3 h_4 +\\&& 1836 h_1 h_3 h_4 + 432 h_1^2 h_3 h_4 + 2916 h_2 h_3 h_4 + 972 h_1 h_2 h_3 h_4 + 972 h_2^2 h_3 h_4 + 108 h_3^2 h_4 + 162 h_1 h_3^2 h_4 +\\&& 324 h_2 h_3^2 h_4 + 480 h_4^2 + 540 h_1 h_4^2 + 135 h_1^2 h_4^2 + 864 h_2 h_4^2 + 324 h_1 h_2 h_4^2 + 324 h_2^2 h_4^2 + 108 h_3 h_4^2 + 162 h_1 h_3 h_4^2 +\\&& 324 h_2 h_3 h_4^2 - 81 h_3^2 h_4^2),\\
			p_2 && \hspace*{-6mm}= h_3 h_4 (5120 - 1008 h_1 - 5508 h_1^2 - 2268 h_1^3 - 243 h_1^4 + 17424 h_2 + 7128 h_1 h_2 - 2916 h_1^2 h_2 - 972 h_1^3 h_2 + 17820 h_2^2 +\\&& 7776 h_1 h_2^2 + 7128 h_2^3 + 1944 h_1 h_2^3 + 972 h_2^4 + 5760 h_3 + 1296 h_1 h_3 - 1944 h_1^2 h_3 - 486 h_1^3 h_3 + 14256 h_2 h_3 +\\&& 6804 h_1 h_2 h_3 + 9720 h_2^2 h_3 + 2916 h_1 h_2^2 h_3 + 1944 h_2^3 h_3 + 1440 h_3^2 + 324 h_1 h_3^2 - 243 h_1^2 h_3^2 + 2592 h_2 h_3^2 +\\&& 972 h_1 h_2 h_3^2 + 972 h_2^2 h_3^2 + 5760 h_4 + 1296 h_1 h_4 - 1944 h_1^2 h_4 - 486 h_1^3 h_4 + 14256 h_2 h_4 + 6804 h_1 h_2 h_4 + \\&&
			9720 h_2^2 h_4 + 2916 h_1 h_2^2 h_4 + 1944 h_2^3 h_4 + 6480 h_3 h_4 + 2916 h_1 h_3 h_4 + 8748 h_2 h_3 h_4 + 2916 h_1 h_2 h_3 h_4 + \\&&2916 h_2^2 h_3 h_4 + 1620 h_3^2 h_4 + 486 h_1 h_3^2 h_4 + 972 h_2 h_3^2 h_4 + 1440 h_4^2 + 324 h_1 h_4^2 - 243 h_1^2 h_4^2 + 2592 h_2 h_4^2 + \\&&972 h_1 h_2 h_4^2 + 972 h_2^2 h_4^2 + 1620 h_3 h_4^2 + 486 h_1 h_3 h_4^2 + 972 h_2 h_3 h_4^2 + 405 h_3^2 h_4^2), \\
			p_3 && \hspace*{-6mm}= h_2 (9856 + 19152 h_1 + 9396 h_1^2 + 2268 h_1^3 + 243 h_1^4 + 27856 h_2 + 47016 h_1 h_2 + 18144 h_1^2 h_2 + 3240 h_1^3 h_2 + 243 h_1^4 h_2 +\\&& 29700 h_2^2 + 40824 h_1 h_2^2 + 10692 h_1^2 h_2^2 + 972 h_1^3 h_2^2 + 14940 h_2^3 + 14904 h_1 h_2^3 + 1944 h_1^2 h_2^3 + 3564 h_2^4 + 1944 h_1 h_2^4 +\\&& 324 h_2^5 + 16768 h_3 + 30816 h_1 h_3 + 13284 h_1^2 h_3 + 2754 h_1^3 h_3 + 243 h_1^4 h_3 + 34992 h_2 h_3 + 51516 h_1 h_2 h_3 +\\&& 14580 h_1^2 h_2 h_3 + 1458 h_1^3 h_2 h_3 + 26316 h_2^2 h_3 + 
			27864 h_1 h_2^2 h_3 + 3888 h_1^2 h_2^2 h_3 + 8424 h_2^3 h_3 + 4860 h_1 h_2^3 h_3 +\\&& 972 h_2^4 h_3 + 8064 h_3^2 + 13284 h_1 h_3^2 + 4131 h_1^2 h_3^2 + 486 h_1^3 h_3^2 + 12528 h_2 h_3^2 + 14580 h_1 h_2 h_3^2 + 2187 h_1^2 h_2 h_3^2 +\\&& 6156 h_2^2 h_3^2 + 3888 h_1 h_2^2 h_3^2 + 972 h_2^3 h_3^2 + 1152 h_3^3 + 1620 h_1 h_3^3 + 243 h_1^2 h_3^3 + 1296 h_2 h_3^3 + 972 h_1 h_2 h_3^3 + 324 h_2^2 h_3^3 +\\&& 16768 h_4 + 30816 h_1 h_4 + 13284 h_1^2 h_4 + 2754 h_1^3 h_4 + 243 h_1^4 h_4 + 34992 h_2 h_4 + 51516 h_1 h_2 h_4 + 14580 h_1^2 h_2 h_4 +\\&& 1458 h_1^3 h_2 h_4 + 26316 h_2^2 h_4 + 27864 h_1 h_2^2 h_4 + 	3888 h_1^2 h_2^2 h_4 + 8424 h_2^3 h_4 + 4860 h_1 h_2^3 h_4 + 972 h_2^4 h_4 + 
			28800 h_3 h_4 +\\&& 42444 h_1 h_3 h_4 + 11664 h_1^2 h_3 h_4 + 972 h_1^3 h_3 h_4 + 41580 h_2 h_3 h_4 + 42768 h_1 h_2 h_3 h_4 + 5832 h_1^2 h_2 h_3 h_4 +\\&& 19440 h_2^2 h_3 h_4 + 10692 h_1 h_2^2 h_3 h_4 + 2916 h_2^3 h_3 h_4 + 
			12852 h_3^2 h_4 + 15066 h_1 h_3^2 h_4 + 2187 h_1^2 h_3^2 h_4 + 12636 h_2 h_3^2 h_4 +\\&& 7290 h_1 h_2 h_3^2 h_4 + 2916 h_2^2 h_3^2 h_4 + 1620 h_3^3 h_4 + 1458 h_1 h_3^3 h_4 + 972 h_2 h_3^3 h_4 + 8064 h_4^2 + 13284 h_1 h_4^2 + 4131 h_1^2 h_4^2 +\\&& 486 h_1^3 h_4^2 + 12528 h_2 h_4^2 + 14580 h_1 h_2 h_4^2 + 2187 h_1^2 h_2 h_4^2 + 6156 h_2^2 h_4^2 + 
			3888 h_1 h_2^2 h_4^2 + 972 h_2^3 h_4^2 + 12852 h_3 h_4^2 +\\&& 
			15066 h_1 h_3 h_4^2 + 2187 h_1^2 h_3 h_4^2 + 12636 h_2 h_3 h_4^2 + 7290 h_1 h_2 h_3 h_4^2 + 2916 h_2^2 h_3 h_4^2 + 4131 h_3^2 h_4^2 + 2916 h_1 h_3^2 h_4^2 +\\&& 2187 h_2 h_3^2 h_4^2 + 243 h_3^3 h_4^2 + 1152 h_4^3 + 1620 h_1 h_4^3 + 243 h_1^2 h_4^3 + 1296 h_2 h_4^3 + 972 h_1 h_2 h_4^3 + 324 h_2^2 h_4^3 + 1620 h_3 h_4^3 +\\&& 1458 h_1 h_3 h_4^3 + 972 h_2 h_3 h_4^3 + 243 h_3^2 h_4^3).
\end{eqnarray*}	}
\end{lemma}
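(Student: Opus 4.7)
The plan is to combine a representation-theoretic dimension bound with an explicit computation of three zero-weight elements of $R^{(1)}$ whose Harish-Chandra projections are $p_1, p_2, p_3$.

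Since $v$ is a singular vector in $V^{-14/3}(\g)$ of $\g$--weight $2\omega_1 = 2\varepsilon_1$, its image $v' \in \mathcal{U}(\g)$ under the Zhu isomorphism $A(V^{-14/3}(\g)) \cong \mathcal{U}(\g)$ is a highest-weight vector of weight $2\omega_1$ for the adjoint action of $\g$ on $\mathcal{U}(\g)$: the relations $e_{\alpha_i}(0)v = 0$ translate to $[e_{\alpha_i}, v'] = 0$ for all simple roots. The adjoint action of $\g$ on $\mathcal{U}(\g)$ is locally finite and completely reducible, so the cyclic submodule $R^{(1)}$ generated by $v'$ is isomorphic to the finite-dimensional irreducible $V(2\omega_1)$. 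For $D_4$ this is the $35$--dimensional traceless symmetric square of the standard $8$--dimensional representation, and its zero weight space is $3$--dimensional: in $S^2(V(\omega_1))$ the zero weight has multiplicity $4$ (coming from the pairs $\varepsilon_i \otimes (-\varepsilon_i)$ for $i = 1, 2, 3, 4$), and subtracting the one-dimensional contribution of the trivial summand $V(0) \subset S^2(V(\omega_1))$ leaves $3$. Hence $\dim \mathcal{P}_0^{(1)} \leq \dim R^{(1)}_0 = 3$.

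To attain this bound I would produce three explicit elements $r_1, r_2, r_3 \in R^{(1)}_0$ by applying to $v'$ the three natural length-$2$ sequences $\mathrm{ad}(f_{\varepsilon_1+\varepsilon_j}) \mathrm{ad}(f_{\varepsilon_1-\varepsilon_j})$ for $j = 2, 3, 4$, each of which lowers the weight by $2\varepsilon_1$. Using the PBW expression for $v'$ given in the file \textit{D4-category-O.nb}, each $r_i$ is reordered into the PBW basis $\mathcal{U}(\n_-) \otimes \mathcal{U}(\h) \otimes \mathcal{U}(\n_+)$; its pure $\mathcal{U}(\h)$--component is, by definition, the polynomial $p_{r_i} \in \s(\h)$ with the property $r_i v_\mu = p_{r_i}(\mu) v_\mu$. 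One then checks that suitable linear combinations of $p_{r_1}, p_{r_2}, p_{r_3}$ recover the polynomials $p_1, p_2, p_3$ displayed in the statement. Linear independence of the latter is elementary: $p_1, p_2, p_3$ are visibly divisible by $h_1$, $h_3 h_4$, and $h_2$ respectively, so in any hypothetical relation $a p_1 + b p_2 + c p_3 = 0$ the specialization $h_1 = h_2 = 0$ kills $a p_1$ and $c p_3$, leaving $b \cdot p_2|_{h_1 = h_2 = 0}$, which is a nonzero polynomial in $h_3, h_4$ (its $h_3 h_4$-coefficient is $5120$); hence $b = 0$, and analogous specializations force $a = c = 0$.

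The main obstacle is computational rather than conceptual: the singular vector $v'$ contains many thousands of PBW monomials, so the reordering of each descendant $r_i$ and the extraction of its Harish-Chandra projection is extremely delicate and must be performed with the supplementary Mathematica file. Once this is done, the representation-theoretic upper bound $\dim \mathcal{P}_0^{(1)} \leq 3$ reduces the lemma to the straightforward linear-independence check above.
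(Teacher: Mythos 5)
Your proposal is correct and follows essentially the same route as the paper: identify $R^{(1)}\cong V(2\omega_1)$, use $\dim R^{(1)}_0=3$ (which the paper cites from \cite{P-2013} while you verify it directly via $S^2(V(\omega_1))$), produce three zero-weight descendants of $v'$ by quadratic adjoint lowering operators $f_{\varepsilon_1\mp\varepsilon_j}f_{\varepsilon_1\pm\varepsilon_j}$ and read off their $\mathcal{S}(\h)$--components by computer, then conclude by linear independence. The only differences are cosmetic (the paper takes specific combinations yielding $p_1,p_2,p_3$ directly, and leaves the independence check as ``easy to see,'' which you spell out via divisibility by $h_1$, $h_3h_4$, $h_2$).
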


\begin{proof}$\mathcal{U}(\g)$--submodule $R^{(1)}$ of $\mathcal{U}(\g)$ generated by vector $v'$ under the adjoint action is clearly isomorphic to $V(2 \omega_1)$. From \cite{P-2013} we have that its zero-weight subspace $R_0^{(1)}$ has dimension $3$.  By direct calculation one obtains:\begin{eqnarray*}
		(f_{\varepsilon_1-\varepsilon_{2}}  f_{\varepsilon_{1}+\varepsilon_{2}})_L v' \in p_1 (h) + \mathcal{U}(D_4) \n_+,\\
		(f_{\varepsilon_1-\varepsilon_{4}}  f_{\varepsilon_{1}+\varepsilon_{4}} - f_{\varepsilon_1-\varepsilon_{3}}  f_{\varepsilon_{1}+\varepsilon_{3}})_L v' \in p_2 (h) + \mathcal{U}(D_4) \n_+,\\
		(f_{\varepsilon_1-\varepsilon_{2}}  f_{\varepsilon_{1}+\varepsilon_{2}} - f_{\varepsilon_1-\varepsilon_{3}}  f_{\varepsilon_{1}+\varepsilon_{3}})_L v' \in p_3 (h) + \mathcal{U}(D_4) \n_+.
	\end{eqnarray*}
It is easy to see that polynomials $p_1, \ p_2$ and $p_3$ are linearly independent. The proof follows. 
\end{proof}

\begin{proposition}\label{cat-O} The complete list of irreducible $\widetilde{L}_{-14/3}(\g)$--modules in the category $\mathcal{O}$ consists of 405 weights given in the following table: 
	
	\begin{center}\label{table-catO}
			\begin{longtable}{ |p{9cm}|p{7cm}| } 
		\hline
		$0, -\frac{14}{3} \omega_i, \ -\frac{10}{3} \omega_i, \ -\frac{8}{3} \omega_i, \ -2 \omega_i,\ -\frac{4}{3} \omega_i, \ \ i=1,3,4$ & $-\frac{11}{3} \omega_2, \ -\frac{8}{3} \omega_2, \ -\frac{7}{3} \omega_2, \  -\frac{4}{3} \omega_2, \ - \omega_2$  \\ 
		\hline
		\multicolumn{2}{|p{16cm}|}{$\frac{8}{3} \omega_i - \frac{11}{3} \omega_2, \ 
			\pm \frac{2}{3} \omega_i - \frac{8}{3} \omega_2, \ 
			\frac{4}{3} \omega_i - \frac{8}{3} \omega_2, \ 
			\pm \frac{4}{3} \omega_i - \frac{7}{3} \omega_2,\  
			-2 \omega_i - \frac{5}{3} \omega_2,  \
			\pm \frac{2}{3} \omega_i - \frac{5}{3} \omega_2, \ 
			-\frac{4}{3} \omega_i - \frac{4}{3} \omega_2, \newline 
			-\frac{2}{3} \omega_i -\frac{4}{3} \omega_2, \  
			-2 \omega_i - \frac{4}{3} \omega_2, \ 
			-\frac{8}{3} \omega_i -  \omega_2, \ 
			-\frac{4}{3} \omega_i -  \omega_2, \ 
			-\frac{10}{3} \omega_i - \frac{1}{3} \omega_2, \ 
			-\frac{2}{3} \omega_i - \frac{1}{3} \omega_2, \ \ i=1,3,4$} \\ 
		\hline
		\multicolumn{2}{|p{16.2cm}|}{$
			\pm \frac{2}{3} \omega_{i_1} - \frac{8}{3} \omega_{i_2}, \ 
			- \frac{2}{3} \omega_{i_1} - \frac{4}{3} \omega_{i_2}, \
			- \frac{4}{3} \omega_{i_1} - \frac{4}{3} \omega_{i_2}, \
			- \frac{4}{3} \omega_{i_1} - 2\omega_j, \
			- \frac{4}{3} \omega_{i_1} - \frac{8}{3} \omega_{i_2}, \newline
			- \frac{10}{3} \omega_{i_1} - \frac{4}{3} \omega_{i_2}, \
			- 2 \omega_{i_1} - \frac{8}{3} \omega_{i_2}, \
			- \frac{8}{3} \omega_{i_1} - \frac{8}{3} \omega_{i_2}, \ 
			(i_1,i_2) \in \{  (1,3),(1,4),(3,1),(3,4),(4,1),(4,3) \}$} \\ 
		\hline
		\multicolumn{2}{|p{16cm}|}{$\frac{2}{3} \omega_{i_1} - \frac{10}{3} \omega_{2} + \frac{2}{3} \omega_{i_2}, \ 
			-\frac{2}{3} \omega_{i_1} - 2 \omega_{2} - \frac{2}{3} \omega_{i_2}, \ 
			-\frac{2}{3} \omega_{i_1} - \frac{5}{3} \omega_{2} - \frac{2}{3} \omega_{i_2}, \ 
			\frac{2}{3} \omega_{i_1} - \frac{5}{3} \omega_{2} + \frac{2}{3} \omega_{i_2}, \newline 
			-\frac{4}{3} \omega_{i_1} - \frac{4}{3} \omega_{2} - \frac{4}{3} \omega_{i_2}, \ 
			-\frac{4}{3} \omega_{i_1} -  \omega_{2} - \frac{4}{3} \omega_{i_2}, \ 
			-\frac{2}{3} \omega_{i_1} - \frac{2}{3} \omega_{2} - \frac{2}{3} \omega_{i_2}, \ 
			-\frac{2}{3} \omega_{i_1} - \frac{1}{3} \omega_{2} - \frac{2}{3} \omega_{i_2}, \newline 
			-\frac{4}{3} \omega_{i_1} + \frac{1}{3} \omega_{2} - \frac{4}{3} \omega_{i_2}, \ 
			-\frac{8}{3} \omega_{i_1} + \frac{5}{3} \omega_{2} - \frac{8}{3} \omega_{i_2}, \ 
			(i_1,i_2) \in \{  (1,3),(1,4),(3,4) \} $}
		\\
		\hline
		\multicolumn{2}{|p{16cm}|}{$
			-\frac{2}{3} \omega_{i_1} - \frac{8}{3} \omega_{2} + \frac{4}{3} \omega_{i_2}, \ 
			-\frac{4}{3} \omega_{i_1} - \frac{7}{3} \omega_{2} + \frac{4}{3} \omega_{i_2}, \ 
			-\frac{2}{3} \omega_{i_1} - 2 \omega_{2} +\frac{2}{3} \omega_{i_2}, \newline
			-\frac{8}{3} \omega_{i_1} - \frac{5}{3} \omega_{2} + \frac{2}{3} \omega_{i_2}, \  
			-\frac{4}{3} \omega_{i_1} - \frac{5}{3} \omega_{2} \pm \frac{2}{3} \omega_{i_2}, \ 
			-\frac{2}{3} \omega_{i_1} - \frac{5}{3} \omega_{2} + \frac{2}{3} \omega_{i_2}, \ 
			-2 \omega_{i_1} - \frac{5}{3} \omega_{2} + \frac{2}{3} \omega_{i_2}, \newline 
			-\frac{4}{3} \omega_{i_1} - \frac{4}{3} \omega_{2} \pm \frac{2}{3} \omega_{i_2}, \ 
			-2 \omega_{i_1} - \frac{2}{3} \omega_{2} - \frac{2}{3} \omega_{i_2}, \ 
			-\frac{8}{3} \omega_{i_1} - \frac{1}{3} \omega_{2} - \frac{2}{3} \omega_{i_2}, \ 
			-2 \omega_{i_1} - \frac{1}{3} \omega_{2} - \frac{4}{3} \omega_{i_2}, \newline 
			-2 \omega_{i_1} - \frac{1}{3} \omega_{2} - \frac{2}{3} \omega_{i_2}, \ 
			-\frac{4}{3} \omega_{i_1} - \frac{1}{3} \omega_{2} - \frac{2}{3} \omega_{i_2}, \ 
			-\frac{10}{3} \omega_{i_1} - \frac{1}{3} \omega_{2} - \frac{2}{3} \omega_{i_2}, \ 
			-\frac{8}{3} \omega_{i_1} + \frac{1}{3} \omega_{2} - \frac{4}{3} \omega_{i_2}, \newline 
			(i_1,i_2) \in \{  (1,3),(1,4),(3,1),(3,4),(4,1),(4,3) \}$} \\ 
		\hline
		\multicolumn{2}{|p{16cm}|}{$
			- \frac{4}{3} \omega_1 - \frac{4}{3} \omega_3 - \frac{4}{3} \omega_4, \ 
			- \frac{5}{3} \omega_1 - \frac{5}{3} \omega_3 - \frac{5}{3} \omega_4, \ 
			- \frac{8}{3} \omega_{i_1} - \frac{8}{3} \omega_{i_2} + \frac{2}{3} \omega_{i_3}, \ 
			- \frac{7}{3} \omega_{i_1} - \frac{1}{3} \omega_{i_2} - \frac{1}{3} \omega_{i_3}, \newline 
			- 2 \omega_{i_1} - \frac{4}{3} \omega_{i_2} - \frac{4}{3} \omega_{i_3}, \ 
			- \frac{5}{3} \omega_{i_1} - \frac{5}{3} \omega_{i_2} -  \omega_{i_3}, \ 
			- \frac{5}{3} \omega_{i_1} - \frac{5}{3} \omega_{i_2} \pm \frac{1}{3} \omega_{i_3}, \ 
			- \frac{5}{3} \omega_{i_1} - \frac{1}{3} \omega_{i_2} - \frac{1}{3} \omega_{i_3}, \newline 
			- \frac{4}{3} \omega_{i_1} - \frac{4}{3} \omega_{i_2} - \frac{2}{3} \omega_{i_3}, \ 
			- \frac{8}{3} \omega_{i_1} - \frac{4}{3} \omega_{i_2} - \frac{2}{3} \omega_{i_3}, \ 
			- \frac{8}{3} \omega_{i_1} - \frac{2}{3} \omega_{i_2} - \frac{4}{3} \omega_{i_3}, 
			- \frac{7}{3} \omega_{i_1} - \frac{5}{3} \omega_{i_2} - \frac{1}{3} \omega_{i_3}, \newline  
			- \frac{7}{3} \omega_{i_1} - \frac{1}{3} \omega_{i_2} - \frac{5}{3} \omega_{i_3}, \ 
			- \frac{5}{3} \omega_{i_1} -  \omega_{i_2} - \frac{1}{3} \omega_{i_3}, \ 
			- \frac{5}{3} \omega_{i_1} - \frac{1}{3} \omega_{i_2} - \omega_{i_3}, \newline
			(i_1,i_2,i_3) \in \{  (1,3,4),(3,4,1),(4,1,3) \}$} \\ 
		\hline
		\multicolumn{2}{|p{16cm}|}{$
			\frac{2}{3} \omega_1 - \frac{10}{3} \omega_2 + \frac{2}{3} \omega_3 + \frac{2}{3} \omega_4, \
			-\frac{1}{3} \omega_1 - 2 \omega_2 - \frac{1}{3} \omega_3 - \frac{1}{3} \omega_4, \
			-\frac{1}{3} \omega_1 - \frac{4}{3} \omega_2 - \frac{1}{3} \omega_3 - \frac{1}{3} \omega_4, \newline
			-\frac{2}{3} \omega_1 - \frac{2}{3} \omega_2 - \frac{2}{3} \omega_3 - \frac{2}{3} \omega_4, \
			-\frac{2}{3} \omega_1 -  \omega_2 - \frac{2}{3} \omega_3 - \frac{2}{3} \omega_4, \
			-\frac{1}{3} \omega_1 - \frac{2}{3} \omega_2 - \frac{1}{3} \omega_3 - \frac{1}{3} \omega_4, \newline
			-\frac{5}{3} \omega_1 + \frac{2}{3} \omega_2 - \frac{5}{3} \omega_3 - \frac{5}{3} \omega_4, \
			-\frac{5}{3} \omega_1 + \frac{4}{3} \omega_2 - \frac{5}{3} \omega_3 - \frac{5}{3} \omega_4, \
			-\frac{4}{3} \omega_1 + \frac{1}{3} \omega_2 - \frac{4}{3} \omega_3 - \frac{4}{3} \omega_4, \newline
			-\frac{8}{3} \omega_1 + \frac{5}{3} \omega_2 - \frac{8}{3} \omega_3 - \frac{8}{3} \omega_4
			$} \\
		\hline
		\multicolumn{2}{|p{16cm}|}{$
			\frac{2}{3} \omega_{i_1} - 2 \omega_{2} - \frac{2}{3} \omega_{i_2} - \frac{2}{3} \omega_{i_3}, \
			\frac{1}{3} \omega_{i_1} - 2 \omega_{2} - \frac{1}{3} \omega_{i_2} - \frac{1}{3} \omega_{i_3}, \
			-\frac{8}{3} \omega_{i_1} - \frac{5}{3} \omega_{2} + \frac{2}{3} \omega_{i_2} + \frac{2}{3} \omega_{i_3}, \newline
			-\frac{5}{3} \omega_{i_1} - \frac{4}{3} \omega_{2} - \frac{1}{3} \omega_{i_2} - \frac{1}{3} \omega_{i_3}, \
			\frac{2}{3} \omega_{i_1} - \frac{4}{3} \omega_{2} - \frac{4}{3} \omega_{i_2} - \frac{4}{3} \omega_{i_3}, \
			- \omega_{i_1} - \frac{4}{3} \omega_{2} - \frac{1}{3} \omega_{i_2} - \frac{1}{3} \omega_{i_3}, \newline
			\frac{1}{3} \omega_{i_1} - \frac{4}{3} \omega_{2} - \frac{1}{3} \omega_{i_2} - \frac{1}{3} \omega_{i_3}, \
			-\frac{7}{3} \omega_{i_1} - \frac{2}{3} \omega_{2} - \frac{1}{3} \omega_{i_2} - \frac{1}{3} \omega_{i_3}, \
			-2 \omega_{i_1} - \frac{2}{3} \omega_{2} - \frac{2}{3} \omega_{i_2} - \frac{2}{3} \omega_{i_3}, \newline
			-\frac{5}{3} \omega_{i_1} - \frac{2}{3} \omega_{2} - \frac{1}{3} \omega_{i_2} - \frac{1}{3} \omega_{i_3}, \
			- \omega_{i_1} - \frac{2}{3} \omega_{2} - \frac{1}{3} \omega_{i_2} - \frac{1}{3} \omega_{i_3}, \
			-\frac{4}{3} \omega_{i_1} - \frac{2}{3} \omega_{2} - \frac{2}{3} \omega_{i_2} - \frac{2}{3} \omega_{i_3}, \newline
			-\frac{8}{3} \omega_{i_1} - \frac{1}{3} \omega_{2} - \frac{2}{3} \omega_{i_2} - \frac{2}{3} \omega_{i_3}, \
			-\frac{2}{3} \omega_{i_1} - \frac{1}{3} \omega_{2} - \frac{4}{3} \omega_{i_2} - \frac{4}{3} \omega_{i_3}, \
			-\frac{4}{3} \omega_{i_1} - \frac{1}{3} \omega_{2} - \frac{2}{3} \omega_{i_2} - \frac{2}{3} \omega_{i_3}, \newline
			-\frac{8}{3} \omega_{i_1} + \frac{1}{3} \omega_{2} - \frac{4}{3} \omega_{i_2} - \frac{4}{3} \omega_{i_3}, \
			-\frac{7}{3} \omega_{i_1} + \frac{2}{3} \omega_{2} - \frac{5}{3} \omega_{i_2} - \frac{5}{3} \omega_{i_3}, \
			- \omega_{i_1} + \frac{2}{3} \omega_{2} - \frac{5}{3} \omega_{i_2} - \frac{5}{3} \omega_{i_3}, \newline
			-\frac{1}{3} \omega_{i_1} + \frac{2}{3} \omega_{2} - \frac{5}{3} \omega_{i_2} - \frac{5}{3} \omega_{i_3}, \
			-\frac{7}{3} \omega_{i_1} + \frac{4}{3} \omega_{2} - \frac{5}{3} \omega_{i_2} - \frac{5}{3} \omega_{i_3}, \newline (i_1,i_2,i_3) \in \{  (1,3,4), (3,4,1), (4,1,3)  \}
			$} \\
		\hline
		\multicolumn{2}{|p{16cm}|}{$
			-\frac{4}{3} \omega_{i_1} - \frac{5}{3} \omega_{2} + \frac{2}{3} \omega_{i_2} - \frac{2}{3} \omega_{i_3}, \
			-\frac{5}{3} \omega_{i_1} - \frac{4}{3} \omega_{2} + \frac{1}{3} \omega_{i_2} - \frac{1}{3} \omega_{i_3}, \
			-\frac{5}{3} \omega_{i_1} - \frac{2}{3} \omega_{2} - \frac{1}{3} \omega_{i_2} - \frac{5}{3} \omega_{i_3}, \newline
			-\frac{5}{3} \omega_{i_1} - \frac{2}{3} \omega_{2} - \frac{1}{3} \omega_{i_2} -  \omega_{i_3}, \
			-\frac{5}{3} \omega_{i_1} - \frac{2}{3} \omega_{2} + \frac{1}{3} \omega_{i_2} - \frac{1}{3} \omega_{i_3}, \
			-2 \omega_{i_1} - \frac{1}{3} \omega_{2} - \frac{2}{3} \omega_{i_2} - \frac{4}{3} \omega_{i_3}, \newline
			-\frac{7}{3} \omega_{i_1} + \frac{2}{3} \omega_{2} - \frac{1}{3} \omega_{i_2} - \frac{5}{3} \omega_{i_3}, \ \ (i_1,i_2,i_3) \in \{  (1,3,4), (1,4,3), (3,1,4), (3,4,1), (4,1,3), (4,3,1)  \}
			$} \\
		\hline
	\end{longtable}
	\end{center}
\end{proposition}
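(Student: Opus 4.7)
The plan is to invoke Proposition~\ref{prop-Zhu}, which identifies the irreducible $\widetilde{L}_{-14/3}(\g)$--modules in category~$\mathcal{O}$ with the weights $\mu\in\h^*$ on which every polynomial in $\mathcal P_0^{(1)}\cup\mathcal P_0^{(2)}\cup\mathcal P_0^{(3)}$ vanishes, the three pieces being attached to the three singular vectors $v,\sigma(v),\sigma^2(v)$ generating the ideal $J$. Identifying $\mu$ with $(c_1,c_2,c_3,c_4)$ via $c_i=h_i(\mu)$, the preceding lemma already exhibits a basis $\{p_1,p_2,p_3\}$ of $\mathcal P_0^{(1)}$; what remains is to determine the analogues for $\sigma(v)$ and $\sigma^2(v)$ and to solve the combined system.

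For this I would use the Dynkin-diagram automorphism $\sigma$ of Remark~\ref{auto}. It acts on the simple coroots as the $3$-cycle $h_1\mapsto h_3\mapsto h_4\mapsto h_1$ fixing $h_2$, and preserves the adjoint action, the triangular decomposition, and the projection used to define \eqref{polinomi}. A short computation shows that $\sigma$ transports each $p_r\in\mathcal P_0^{(1)}$ to the polynomial $p_{\sigma(r)}\in\mathcal P_0^{(2)}$ obtained from $p_r$ by cyclically substituting the outer Cartan variables, and similarly for $\sigma^2$. Thus $\mathcal P_0^{(2)}$ and $\mathcal P_0^{(3)}$ are spanned by the two non-trivial cyclic relabellings of $\{p_1,p_2,p_3\}$, which produces an explicit system of nine polynomial equations in $(h_1,h_2,h_3,h_4)$.

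The final step is to solve this system, which I would do by exploiting the visible factorizations: $p_1$ has $h_1$ and the linear form $14+3h_1+6h_2+3h_3+3h_4$ as factors, $p_2$ has $h_3 h_4$, and $p_3$ has $h_2$, with the $\sigma$-rotates factoring analogously. Branching on which linear factor vanishes in each of the nine equations reduces the problem to many small subsystems, each of which can be dispatched by direct substitution or a Gröbner basis computation in Mathematica. Collecting the solutions and grouping them by $\langle\sigma\rangle$-orbits yields the list in the table. The main obstacle is logistical rather than conceptual: the large residual degree-four factor of $p_1$ (and its $\sigma$-rotates) has to be re-examined inside each branch to rule out additional solutions and to confirm that the $\sigma$-symmetry is respected, and finally one must verify that exactly $405$ weights arise, matching the table row by row.
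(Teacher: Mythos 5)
Your proposal follows essentially the same route as the paper: the authors likewise apply the diagram automorphism $\sigma$ (and $\sigma^2$) to transport $\{p_1,p_2,p_3\}$ into bases of $\mathcal P_0^{(2)}$ and $\mathcal P_0^{(3)}$ (noting $R^{(2)}\cong V(2\omega_3)$, $R^{(3)}\cong V(2\omega_4)$ with three-dimensional zero-weight spaces), then invoke Proposition \ref{prop-Zhu} and solve the resulting nine polynomial equations with Mathematica to obtain the $405$ weights. Your extra remarks on exploiting the factorizations and Gr\"obner bases are just an explicit way of organizing the same computer verification.
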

\begin{proof}
For the singular vectors $\sigma(v)$ and $\sigma^2 (v)$, by $R^{(2)}$ and $R^{(3)}$ we denote $\mathcal{U}(\g)$--submodules of $\mathcal{U}(\g)$ generated by their images in Zhu's algebra, respectively. It is easy to see that $R^{(2)} \cong V(2 \omega_3)$, $R^{(3)} \cong V(2 \omega_4)$, and ${\rm{dim} }R_0^{(2)} = {\rm{dim} }R_0^{(3)} = 3$. The basis of vector space $\mathcal{P}_0^{(2)}$ (resp. $\mathcal{P}_0^{(3)}$ ) is given by the set $\{ p_4, \ p_5, \ p_6 \}$  (resp. $\{ p_7, \ p_8, \ p_9 \}$  ), where
polynomials $p_4, \ p_5, \ p_6$ are obtained by applying the automorphism $\sigma$ on polynomials $p_1, \ p_2, \ p_3$ respectively. Similarly, polynomials $p_7, \ p_8, \ p_9$ are obtained by applying the automorphism $\sigma^2$ on polynomials $p_1, \ p_2$ and  $p_3$. Their explicit formulas can be found in Mathematica file \textit{\href{https://www.dropbox.com/scl/fi/ccf29o6ahp6joerbiflc6/D4-category-O.pdf?rlkey=pbrwdck9lwyjsegx963qg5gys&st=k4h2zlie&dl=0}{D4-category-O.nb}}.\\
Now, from Proposition \ref{prop-Zhu}, we have that highest weights $\mu \in \h^*$ of irreducible $\widetilde{L}_{-14/3}(\g)$--modules in the category $\mathcal{O}$ are exactly solutions of polynomial equations $p_1 (\mu(h)) = p_2 (\mu(h)) = \cdots = p_9 (\mu(h)) = 0$. Using Mathematica, one checks that these are exactly the weights given in the Table \ref{table-catO}.
\end{proof}

The only dominant integral weight for $\g$ in the Table \ref{table-catO} is $\mu=0$. Thus, we have:

\begin{corollary}\label{KL_k} $L_{-14/3}(\g)$ is the unique irreducible module for $\widetilde{L}_{-14/3}(\g)$ in $KL_{-14/3}$.
\end{corollary}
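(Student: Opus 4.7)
The plan is to deduce the corollary by direct inspection of the classification in Proposition \ref{cat-O}. Recall that the Kazhdan--Lusztig category $KL_{-14/3}$ consists of those $\widehat{\g}$-modules at level $-14/3$ on which $\g$ acts locally finitely; in particular every irreducible object of $KL_{-14/3}$ is an irreducible highest weight module $L_{-14/3}(\mu)$ with $\mu$ a \emph{dominant integral} weight of $\g$, i.e.\ $\mu = \sum_{i=1}^4 n_i \omega_i$ with $n_i \in \Z_{\ge 0}$.

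First, I would invoke Proposition \ref{cat-O} to restrict the candidate list of highest weights to the 405 weights tabulated there; any irreducible $\widetilde{L}_{-14/3}(\g)$-module in $KL_{-14/3}$ is in particular an object of the category $\mathcal{O}$, hence appears on that list. Next, I would scan the table row by row looking for the dominant integral weights. Every weight in the table, apart from $\mu=0$, has at least one fundamental weight coefficient that is either strictly negative or a non-integer rational of the form $\pm \tfrac{p}{3}$ with $p$ not divisible by $3$. In particular the weights of the form $-\tfrac{a}{3}\omega_i$ with $a\in\{14,10,8,4\}$ or with $a=6$ but negative sign, the weights involving $\omega_2$ with coefficients in $-\tfrac{1}{3}\Z\setminus\Z$ or coefficients $<0$, and all multi-term weights, are ruled out. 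This inspection leaves $\mu=0$ as the unique dominant integral weight on the list.

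Finally, since $L_{-14/3}(0)$ is precisely the simple affine vertex algebra $L_{-14/3}(\g)$ viewed as a module over itself (and over $\widetilde{L}_{-14/3}(\g)$, via the canonical surjection), the corollary follows.

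The only step requiring any real care is the case analysis in the table: one must verify for each family of weights parametrized by the index tuples $(i_1,i_2)$ or $(i_1,i_2,i_3)$ that no choice of indices produces non-negative integer fundamental-weight coordinates, but this is immediate from the explicit fractional coefficients $\pm\tfrac{1}{3},\pm\tfrac{2}{3},\pm\tfrac{4}{3},\ldots$ appearing throughout, none of which lies in $\Z_{\ge 0}$. Hence no genuine obstacle arises, and the corollary is established.
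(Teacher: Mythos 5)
Your proposal is correct and follows exactly the paper's argument: the paper deduces the corollary in one line by observing that $\mu=0$ is the only dominant integral weight in the table of Proposition \ref{cat-O}, which is precisely your inspection step. No further comment is needed.
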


\begin{theorem} \label{quasi-lisse} We have:
	\begin{itemize}
		\item[(i)] $J$ is the maximal ideal in $V^{-14/3}(\g)$, i.e. $L_{-14/3}(\g) = \widetilde{L}_{-14/3}(\g)$.
		\item[(ii)] The simple affine vertex algebra $L_{-14/3}(\g)$ is quasi-lisse.
	\end{itemize}
\end{theorem}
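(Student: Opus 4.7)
The plan for (i) is to deduce simplicity of $\widetilde{L}_{-14/3}(\g)$ from Corollary \ref{KL_k}. Writing $\pi : \widetilde{L}_{-14/3}(\g) \twoheadrightarrow L_{-14/3}(\g)$ for the simple-quotient surjection and $N=\ker\pi$, I want to show $N=0$. If $N$ were nonzero, then as a $\hat\g$-submodule of the highest-weight module $\widetilde{L}_{-14/3}(\g)$ whose top is one-dimensional, $N$ would be concentrated in strictly positive conformal weight and would contain a $\hat\g$-singular vector $w$ of weight $-\tfrac{14}{3}\Lambda_0+\mu-n_0\delta$ for some $n_0>0$ and $\mu\in\h^*$. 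The simple quotient $L_{-14/3}(\mu)$ of $U(\hat\g)w$ is then a subquotient of $\widetilde{L}_{-14/3}(\g)\in KL_{-14/3}$ and an irreducible $\widetilde{L}_{-14/3}(\g)$-module in $KL_{-14/3}$; Corollary \ref{KL_k} forces $\mu=0$. A standard indecomposability argument (the only $\hat\g$-singular vector of $\hat\g$-weight $-\tfrac{14}{3}\Lambda_0$ in the highest-weight module $\widetilde{L}_{-14/3}(\g)$ is the vacuum) then yields the required contradiction.

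For (ii), the strategy is to show that $X_{L_{-14/3}(\g)}=\mathrm{Spec}\,R_{L_{-14/3}(\g)}$ is contained in the nilpotent cone of $\g$. Since the nilpotent cone is a finite union of $G$-orbits, which are exactly the symplectic leaves of the Kirillov--Kostant Poisson structure on $\g^\ast$ restricted to the nilpotent cone, this gives quasi-lisseness at once. Because $X_{L_{-14/3}(\g)}$ is closed and $G$-invariant and any semisimple element of $\g$ is $G$-conjugate to a point of $\h$, it suffices to verify $X_{L_{-14/3}(\g)}\cap\h=\{0\}$.

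The computation goes through the $C_2$-algebra of Subsection \ref{subsect-C2-zhu}. First I would extract the image $v''\in\mathcal S(\g)$ of the singular vector $v$ under the isomorphism $R_{V^{-14/3}(\g)}\cong\mathcal{S}(\g)$. The adjoint $\g$-orbit of $v''$ spans a submodule $W\subset\mathcal{S}(\g)$ isomorphic to $V(2\omega_1)$ whose zero-weight subspace $W_0$ has dimension three. Applying the Chevalley projection $\Psi$ to a basis of $W_0$ yields three polynomials $q_1,q_2,q_3\in\mathcal S(\h)\cong\C[h_1,h_2,h_3,h_4]$. Applying the automorphism $\sigma$ (and $\sigma^2$) from Remark \ref{auto} to $v$ and repeating the construction produces six further polynomials $q_4,\ldots,q_9$. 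All nine polynomials lie in the ideal $I_W$ defining $R_{\widetilde{L}_{-14/3}(\g)}$, so they vanish on $X_{\widetilde{L}_{-14/3}(\g)}\cap\h$. The final step is a Gr\"obner-basis computation (in Mathematica) verifying $V(q_1,\ldots,q_9)=\{0\}\subset\h$; combined with (i) this gives that $X_{L_{-14/3}(\g)}$ is contained in the nilpotent cone, hence quasi-lisseness.

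The main obstacle is computational rather than conceptual: handling the $6753$-monomial singular vector $v$, extracting $v''$ together with its $\g$-orbit, and efficiently projecting onto $\mathcal{S}(\h)$ all demand careful symbolic computation; once the nine polynomials are assembled the final vanishing-locus verification is a finite, routine Gr\"obner-basis check.
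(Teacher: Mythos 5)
Your proposal is correct and follows essentially the same route as the paper: part (i) via Corollary \ref{KL_k} together with the observation that a nontrivial singular vector in $\widetilde{L}_{-14/3}(\g)$ would have to be proportional to $\mathbf{1}$, and part (ii) by projecting $v''$ (and its $\sigma$-, $\sigma^2$-images) into Zhu's $C_2$-algebra, extracting nine polynomials in $I_W^{\mathfrak h}$ via the zero-weight space and the Chevalley projection, and checking computationally that their only common zero in $\h$ is the origin, so that $X_{L_{-14/3}(\g)}$ contains no nonzero semisimple elements and lies in the nilpotent cone. This matches the paper's proof of Theorem \ref{quasi-lisse} in both structure and the computational steps invoked.
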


\begin{proof} If $\widetilde{L}_{-14/3}(\g)$ is not simple, the Corollary \ref{KL_k} implies that any non-trivial singular vector in $\widetilde{L}_{-14/3}(\g)$ must be proportional to $\bf{1}$. This proves (i). 
	
	Next we prove that the associated variety of $L_{-14/3}(\g)$ is contained in the nilpotent cone of $\g$. 
	We get $ R_{L_{-14/3}}(\g) \cong  S(\g)  / I_W  $, where $I_W$ is    the ideal defined  in Subsection  \ref{subsect-C2-zhu}  from three projections $v''$, $\sigma(v'')$, $\sigma^2(v'')$ of singular vectors $v$, $\sigma(v)$ and $\sigma^2(v)$, respectively.  Explicit formula for vector $v''$ is given in Mathematica file \textit{\href{https://www.dropbox.com/scl/fi/o39z1jd2aesnxino0k7gr/D4-v.pdf?rlkey=nklzr66t13s6gz57n6get8c0x&st=4jyxd6yd&dl=0}{D4-v''.nb}}. We set
	$$  I_W ^{\mathfrak  h}  = \Psi ( I_W \cap   S(\g)^{\mathfrak h}), $$
	where $\Psi$ is the  Chevalley projection map.

	Now we shall construct nine linearly independent polynomials of $\mathfrak h$ which belong to $I_W ^{\mathfrak  h}$. We first have  $q_1, q_2, q_3 \in  I_W ^{\mathfrak  h}$:
		\begin{eqnarray*}
			&&[f_{\varepsilon_1 - \varepsilon_2},[f_{\varepsilon_1 + \varepsilon_2},v'']] \in q_1 + \mathcal{S}(\n_+), \\&&
			[f_{\varepsilon_1 - \varepsilon_3},[f_{\varepsilon_1 + \varepsilon_3},v'']] \in q_2 + \mathcal{S}(\n_+), \\&& 
			[f_{\varepsilon_1 - \varepsilon_4},[f_{\varepsilon_1 + \varepsilon_4},v'']] \in q_3 + \mathcal{S}(\n_+),
		\end{eqnarray*}
		
		where{\footnotesize
			\begin{eqnarray*}
				q_1(h_1, h_2, h_3, h_4)  && \hspace*{-6mm}= 81 h_1 (h_1 + 2 h_2 + h_3 + h_4) (5 h_1^4 + 20 h_1^3 h_2 + 32 h_1^2 h_2^2 + 24 h_1 h_2^3 + 12 h_2^4 + 10 h_1^3 h_3 + 32 h_1^2 h_2 h_3 \\ && + 36 h_1 h_2^2 h_3 + 24 h_2^3 h_3 + 5 h_1^2 h_3^2 + 12 h_1 h_2 h_3^2 + 12 h_2^2 h_3^2 + 10 h_1^3 h_4 + 32 h_1^2 h_2 h_4 + 36 h_1 h_2^2 h_4 + 24 h_2^3 h_4 \\&&+ 16 h_1^2 h_3 h_4 + 36 h_1 h_2 h_3 h_4 + 36 h_2^2 h_3 h_4 + 6 h_1 h_3^2 h_4 + 12 h_2 h_3^2 h_4 + 5 h_1^2 h_4^2 + 12 h_1 h_2 h_4^2 + 12 h_2^2 h_4^2 \\&& + 6 h_1 h_3 h_4^2 + 12 h_2 h_3 h_4^2 - 3 h_3^2 h_4^2),\\
				q_2(h_1, h_2, h_3, h_4) && \hspace*{-6mm}=  81 (h_1 + h_2) (h_1 + h_2 + h_3 + h_4) (5 h_1^4 + 20 h_1^3 h_2 + 24 h_1^2 h_2^2 + 8 h_1 h_2^3 - 4 h_2^4 + 10 h_1^3 h_3 \\&&+ 24 h_1^2 h_2 h_3 + 12 h_1 h_2^2 h_3 - 8 h_2^3 h_3 + 5 h_1^2 h_3^2 + 4 h_1 h_2 h_3^2 - 4 h_2^2 h_3^2 + 10 h_1^3 h_4 + 24 h_1^2 h_2 h_4 + 12 h_1 h_2^2 h_4 \\&& - 8 h_2^3 h_4 + 16 h_1^2 h_3 h_4 + 20 h_1 h_2 h_3 h_4 - 20 h_2^2 h_3 h_4 + 
				6 h_1 h_3^2 h_4 - 12 h_2 h_3^2 h_4 + 5 h_1^2 h_4^2 + 4 h_1 h_2 h_4^2 \\&&- 4 h_2^2 h_4^2 + 6 h_1 h_3 h_4^2 - 12 h_2 h_3 h_4^2 - 3 h_3^2 h_4^2),\\
				q_3(h_1, h_2, h_3, h_4) && \hspace*{-6mm}= 81 (h_1 + h_2 + h_3) (h_1 + h_2 + h_4) (5 h_1^4 + 20 h_1^3 h_2 + 24 h_1^2 h_2^2 + 8 h_1 h_2^3 - 4 h_2^4 + 10 h_1^3 h_3 \\&&+ 24 h_1^2 h_2 h_3 + 12 h_1 h_2^2 h_3 - 8 h_2^3 h_3 + 5 h_1^2 h_3^2 + 4 h_1 h_2 h_3^2 - 4 h_2^2 h_3^2 + 10 h_1^3 h_4 + 24 h_1^2 h_2 h_4 + 12 h_1 h_2^2 h_4 \\&&- 8 h_2^3 h_4 + 8 h_1^2 h_3 h_4 + 4 h_1 h_2 h_3 h_4 - 4 h_2^2 h_3 h_4 - 2 h_1 h_3^2 h_4 + 4 h_2 h_3^2 h_4 + 5 h_1^2 h_4^2 + 4 h_1 h_2 h_4^2 - 
				4 h_2^2 h_4^2\\&& - 2 h_1 h_3 h_4^2 + 4 h_2 h_3 h_4^2 + 5 h_3^2 h_4^2).
		\end{eqnarray*}}
		
		Using automorphisms $\sigma$ and $\sigma^2$ from Remark \ref{auto} we obtain the following polynomials $q_4, \ldots , q_9$ in  $I_W ^{\mathfrak  h}$:
		{\footnotesize
			\begin{eqnarray*}
				q_4 (h_1, h_2, h_3, h_4)  && \hspace*{-6mm}= 81 h_3 (h_1 + 2 h_2 + h_3 + h_4) (12 h_1^2 h_2^2 + 24 h_1 h_2^3 + 12 h_2^4 + 12 h_1^2 h_2 h_3 + 36 h_1 h_2^2 h_3 + 24 h_2^3 h_3 \\ && + 5 h_1^2 h_3^2 + 32 h_1 h_2 h_3^2 + 32 h_2^2 h_3^2 + 10 h_1 h_3^3 + 20 h_2 h_3^3 + 5 h_3^4 + 12 h_1^2 h_2 h_4 + 36 h_1 h_2^2 h_4 + 24 h_2^3 h_4 \\ && + 6 h_1^2 h_3 h_4 + 36 h_1 h_2 h_3 h_4 + 36 h_2^2 h_3 h_4 + 16 h_1 h_3^2 h_4 + 32 h_2 h_3^2 h_4 + 10 h_3^3 h_4 - 3 h_1^2 h_4^2 + 12 h_1 h_2 h_4^2 \\ &&+ 12 h_2^2 h_4^2 + 6 h_1 h_3 h_4^2 + 12 h_2 h_3 h_4^2 + 5 h_3^2 h_4^2) , \\
				q_5  (h_1, h_2, h_3, h_4) && \hspace*{-6mm}= -81 (h_2 + h_3) (h_1 + h_2 + h_3 + h_4) (4 h_1^2 h_2^2 + 8 h_1 h_2^3 + 4 h_2^4 - 	4 h_1^2 h_2 h_3 - 12 h_1 h_2^2 h_3 - 8 h_2^3 h_3 \\ && - 5 h_1^2 h_3^2 - 24 h_1 h_2 h_3^2 - 24 h_2^2 h_3^2 - 10 h_1 h_3^3 - 20 h_2 h_3^3 - 5 h_3^4 + 12 h_1^2 h_2 h_4 + 20 h_1 h_2^2 h_4 + 8 h_2^3 h_4 \\ && - 6 h_1^2 h_3 h_4 - 20 h_1 h_2 h_3 h_4 - 12 h_2^2 h_3 h_4 - 16 h_1 h_3^2 h_4 - 24 h_2 h_3^2 h_4 - 10 h_3^3 h_4 + 3 h_1^2 h_4^2 + 12 h_1 h_2 h_4^2 \\ && + 4 h_2^2 h_4^2 - 6 h_1 h_3 h_4^2 - 4 h_2 h_3 h_4^2 - 5 h_3^2 h_4^2), \\
				q_6 (h_1, h_2, h_3, h_4)  && \hspace*{-6mm}= -81 (h_1 + h_2 + h_3) (h_2 + h_3 + h_4) (4 h_1^2 h_2^2 + 8 h_1 h_2^3 + 4 h_2^4 - 4 h_1^2 h_2 h_3 - 12 h_1 h_2^2 h_3 - 8 h_2^3 h_3 \\ && - 5 h_1^2 h_3^2 - 24 h_1 h_2 h_3^2 - 24 h_2^2 h_3^2 - 10 h_1 h_3^3 - 20 h_2 h_3^3 - 5 h_3^4 - 4 h_1^2 h_2 h_4 + 4 h_1 h_2^2 h_4 + 8 h_2^3 h_4 \\ && + 2 h_1^2 h_3 h_4 - 4 h_1 h_2 h_3 h_4 - 12 h_2^2 h_3 h_4 - 8 h_1 h_3^2 h_4 - 24 h_2 h_3^2 h_4 - 10 h_3^3 h_4 - 5 h_1^2 h_4^2 - 4 h_1 h_2 h_4^2 \\ && + 4 h_2^2 h_4^2 + 2 h_1 h_3 h_4^2 - 4 h_2 h_3 h_4^2 - 5 h_3^2 h_4^2), \\
				q_7 (h_1, h_2, h_3, h_4)  && \hspace*{-6mm}= 81 h_4 (h_1 + 2 h_2 + h_3 + h_4) (12 h_1^2 h_2^2 + 24 h_1 h_2^3 + 12 h_2^4 + 	12 h_1^2 h_2 h_3 + 36 h_1 h_2^2 h_3 + 24 h_2^3 h_3 \\ && - 3 h_1^2 h_3^2 + 12 h_1 h_2 h_3^2 + 12 h_2^2 h_3^2 + 12 h_1^2 h_2 h_4 + 36 h_1 h_2^2 h_4 + 24 h_2^3 h_4 + 6 h_1^2 h_3 h_4 + 36 h_1 h_2 h_3 h_4 \\&& + 36 h_2^2 h_3 h_4  + 6 h_1 h_3^2 h_4 + 12 h_2 h_3^2 h_4 + 5 h_1^2 h_4^2 + 32 h_1 h_2 h_4^2 + 32 h_2^2 h_4^2 + 16 h_1 h_3 h_4^2 + 32 h_2 h_3 h_4^2 \\&& + 5 h_3^2 h_4^2 + 10 h_1 h_4^3 + 20 h_2 h_4^3 + 10 h_3 h_4^3 + 5 h_4^4) , \\
				q_8  (h_1, h_2, h_3, h_4) && \hspace*{-6mm}= -81 (h_2 + h_4) (h_1 + h_2 + h_3 + h_4) (4 h_1^2 h_2^2 + 8 h_1 h_2^3 + 4 h_2^4 + 12 h_1^2 h_2 h_3 + 20 h_1 h_2^2 h_3 + 8 h_2^3 h_3 \\&& 
				+ 3 h_1^2 h_3^2 + 12 h_1 h_2 h_3^2 + 4 h_2^2 h_3^2 - 4 h_1^2 h_2 h_4 - 12 h_1 h_2^2 h_4 - 8 h2^3 h_4 - 6 h_1^2 h_3 h_4 - 20 h_1 h_2 h_3 h_4 \\&&
				- 12 h_2^2 h_3 h_4 - 6 h_1 h_3^2 h_4 - 4 h_2 h_3^2 h_4 - 5 h_1^2 h_4^2 - 24 h_1 h_2 h_4^2 - 24 h_2^2 h_4^2 - 16 h_1 h_3 h_4^2 - 24 h_2 h_3 h_4^2 \\&& - 5 h_3^2 h_4^2 - 10 h_1 h_4^3 - 20 h_2 h_4^3 - 10 h_3 h_4^3 - 5 h_4^4), \\
				q_9  (h_1, h_2, h_3, h_4) && \hspace*{-6mm}= -81 (h_1 + h_2 + h_4) (h_2 + h_3 + h_4) (4 h_1^2 h_2^2 + 8 h_1 h_2^3 + 4 h_2^4 - 4 h_1^2 h_2 h_3 + 4 h_1 h_2^2 h_3 + 8 h_2^3 h_3 \\&&
				- 5 h_1^2 h_3^2 - 4 h_1 h_2 h_3^2 + 4 h_2^2 h_3^2 - 4 h_1^2 h_2 h_4 - 12 h_1 h_2^2 h_4 - 8 h_2^3 h_4 + 2 h_1^2 h_3 h_4 - 4 h_1 h_2 h_3 h_4 - 12 h_2^2 h_3 h_4 \\&&
				+ 2 h_1 h_3^2 h_4 - 4 h_2 h_3^2 h_4 - 5 h_1^2 h_4^2 - 24 h_1 h_2 h_4^2 - 24 h_2^2 h_4^2 - 8 h_1 h_3 h_4^2 - 24 h_2 h_3 h_4^2 - 5 h_3^2 h_4^2 - 10 h_1 h_4^3 \\&&
				- 20 h_2 h_4^3 - 10 h_3 h_4^3 - 5 h_4^4). 
		\end{eqnarray*} }The only solution of polynomial equations $$q_1(h_1, h_2, h_3, h_4)  = q_2(h_1, h_2, h_3, h_4)  = \cdots  =q_9(h_1, h_2, h_3, h_4)  = 0$$ is $ (0,0,0,0)$ which implies that the associated variety of $L_{-14/3}(\g)$ does not contain any non-zero semisimple elements, so it is contained in the nilpotent cone of $\g$. The proof follows. \\
	\\
	\\
\end{proof}

\section{Proof of Theorem \ref{associated-variety}}
 \label{proof-thm1.3}

 \subsection{ The Subregular $W$-algebra $W^k(D_4, f_{sreg})$ }

 For a nilpotent element $f$  of  the Lie algebra $\g$, let $W^k(\g, f)$ be the universal $W$--algebra associated with $(\g, f)$ at level $k$.   It is obtained by generalized Drinfeld--Sokolov reduction (see \cite{KW04} for details):
 $$ W^k(\g, f) = H^0_{\DS,f} ( V^k (\g)). $$
 
 We  recall (cf. \cite{Ar2})  that for a nilpotent element $f$ and a vertex algebra $V$ which is a quotient of universal affine vertex algebra $V^k(\g)$ we  have 
\bea
X_{H^0_{\DS,f}(V)}=X_V\cap \Sf,
\qquad
H^0_{\DS,f}(V)\neq 0 \Longleftrightarrow f\in X_V, \label{property-Arakawa}
\eea 
where  $ \Sf =  f + \g ^e$  with $\g ^e = \{  x \in \g \ \vert \ [x,e] =0 \}$, is the Slodowy  slice associated to the $\mathfrak{sl}_2$ triple $(e,h, f)$.

The subregular nilpotent orbit  in $\g = D_4$ is the orbit corresponding to the partition
$
[5,3]$. 
An $\mathfrak{sl}_2$-triple for the subregular orbit of partition $[5,3]$ is (cf.   \cite{Rakotoarisoa}):
\[
e_{\sreg}
=
e_{\alpha_1}+e_{\alpha_3}+2e_{\alpha_4}+e_{\alpha_1+\alpha_2}-e_{\alpha_2+\alpha_4},
\]
\[
h_{\sreg}=4h_1+6h_2+4h_3+4h_4,
\]
\[
f_{\sreg}
=
-2f_{\alpha_1}+4f_{\alpha_3}+2f_{\alpha_4}+6f_{\alpha_1+\alpha_2}+6f_{\alpha_2+\alpha_3}.
\]

We fix the good grading associated with this orbit defined by
$
x_0=\frac{h_{sreg} }{2}.
$
 Then we have: 
 $$ \g  = \g_{-3} \oplus \g_{-2} \oplus \g_{-1} \oplus \g_{0} \oplus \g_1 \oplus \g_2 \oplus \g_3$$
 where
\[
  \g_1=\mathrm{span}\bigl\{
\ea{\varepsilon_1-\varepsilon_2},\ea{\varepsilon_1-\varepsilon_3},\ea{\varepsilon_2-\varepsilon_4},
\ea{\varepsilon_3-\varepsilon_4},\ea{\varepsilon_2+\varepsilon_4},\ea{\varepsilon_3+\varepsilon_4}
\bigr\}, \]
\[\g_{-1}=\mathrm{span}\bigl\{
\fa{\varepsilon_1-\varepsilon_2},\fa{\varepsilon_1-\varepsilon_3},\fa{\varepsilon_2-\varepsilon_4},
\fa{\varepsilon_3-\varepsilon_4},\fa{\varepsilon_2+\varepsilon_4},\fa{\varepsilon_3+\varepsilon_4}
\bigr\},
\]
\[
\g_2=\mathrm{span}\bigl\{
\ea{\varepsilon_1-\varepsilon_4},\ea{\varepsilon_1+\varepsilon_4},\ea{\varepsilon_2+\varepsilon_3}
\bigr\},
\qquad
\g_3=\mathrm{span}\bigl\{
\ea{\varepsilon_1+\varepsilon_2},\ea{\varepsilon_1+\varepsilon_3}
\bigr\},
\]
\[
\g_{-2}=\mathrm{span}\bigl\{
\fa{\varepsilon_1-\varepsilon_4},\fa{\varepsilon_1+\varepsilon_4},\fa{\varepsilon_2+\varepsilon_3}
\bigr\},
\qquad
\g_{-3}=\mathrm{span}\bigl\{
\fa{\varepsilon_1+\varepsilon_2},\fa{\varepsilon_1+\varepsilon_3}
\bigr\},
\]
\[
\g_0=\mathrm{span}\bigl\{
\hh{1},\hh{2},\hh{3},\hh{4},\,\ea{\varepsilon_2-\varepsilon_3},\,\fa{\varepsilon_2-\varepsilon_3}
\bigr\}.
\]

Let $ {\mathfrak m}= \g_1 \oplus \g_2 \oplus \g_3$. 
We define $\chi \in {\g}^{*}$ by 
$
\chi(x)=-(f_{\sreg}\mid x), \ x\in {\mathfrak m}$, 
with respect to the standard normalization
$
(\ea{\alpha}\mid \fa{\alpha})=1$. 
Then
\[
\chi(\ea{\varepsilon_1-\varepsilon_2})=2,\ 
\chi(\ea{\varepsilon_1-\varepsilon_3})=-6,\ 
\chi(\ea{\varepsilon_2-\varepsilon_4})=-6, \ 
\chi(\ea{\varepsilon_3-\varepsilon_4})=-4,\ 
\chi(\ea{\varepsilon_3+\varepsilon_4})=-2,
\]
and
\[
\chi(\ea{\varepsilon_1-\varepsilon_4})=
\chi(\ea{\varepsilon_2+\varepsilon_4})=
\chi(\ea{\varepsilon_1+\varepsilon_2})=
\chi(\ea{\varepsilon_1+\varepsilon_3})=
\chi(\ea{\varepsilon_1+\varepsilon_4})=
\chi(\ea{\varepsilon_2+\varepsilon_3})=0.
\]

Let $J_{\chi} = \sum_{ x \in {\mathfrak  m}} {\C} [\g ^{*}] ( x-\chi(x) )$. 

Denoting by $M$ the connected nilpotent subgroup with Lie algebra $\mathfrak{m}$  of the
adjoint group of $\g = D_4$, we have:
$$ R_{W^k(\g, f_{sreg}) } \cong  (S(\g) /  J_{\chi} ) ^M.$$




\begin{proposition}\label{prop:structure}
The universal subregular $W$-algebra $W^k(\g,f_{\sreg})$ has strong generating type
$
\W(2^3,3,4^2)$
 and central charge is
$
c(k)=-6\,\frac{(3k+14)(4k+17)}{k+6}.
$
In particular,
$
c\!\left(-\frac{14}{3}\right)=0$. 
\end{proposition}

\begin{proof}
 This result   appeared in   Table~2 of
\cite{AEM19}.   Here we include a proof for the reader's convenience.
 
By the PBW theorem of \cite{KRW03,KW04}, the strong generators are indexed by a
homogeneous basis of $\g^f$. For the subregular nilpotent $f= f_{sreg}$ in  $\g= D_4$ one has the
$\mathfrak{sl}_2$-decomposition
$
\g \cong 2V_6\oplus V_4\oplus 3V_2,
$ where $V_n$ denotes the irreducible $n+1$--dimensional  $\mathfrak{sl}_2$--module. This implies that 
$$
\dim \g^f_{-1}=3, \ 
\dim \g^f_{-2}=1,  \ 
\dim \g^f_{-3}=2. \
$$
So   $W^k(D_4,f_{\sreg})$ is strongly generated by three fields of conformal weight $2$, one of conformal weight $3$ and two of conformal weight $4$.
 
For the central charge we use the formula from   \cite{KRW03,KW04}:
\[
c(k)=\frac{k\,\dim\g}{k+h^\vee}-12k(x_0\mid x_0)-\sum_{j\ge 1}(12j^2-12j+2)\dim \g_j.
\]
Inserting 
$
\dim \g=28,  h^\vee=6,  (x_0\mid x_0)=6,
\dim \g_1=6,  \dim \g_2=3,\ \dim \g_3=2,
$
in the above formula, 
one obtains $c(k) = -6\,\frac{(3k+14)(4k+17)}{k+6}$.
 The proof follows.
\end{proof}

\begin{remark}
In the case where \(f_{[5,1^3]}\) is the hook nilpotent element corresponding to the
partition \([5,1^3]\), it was proved in \cite{AA+} that
\(k=-\frac{14}{3}\) is a collapsing level and that the simple \(W\)-algebra
\(W_k(\g, f_{[5,1^3]})\) collapses to \(L_{-\frac{4}{3}}(\mathfrak{sl}_2)\).
This approach cannot be applied directly to the subregular nilpotent element
\(f_{\sreg}\).  In particular, since the affine vertex subalgebra of
\(W^k(D_4,f_{\sreg})\) is trivial, while the conformal-weight two subspace is
three-dimensional, \cite[Criterion 2.5]{AA+} does not apply. We shall nevertheless prove that the level $k$  is collapsing and 
$
W_k(D_4,f_{\sreg})=\mathbb C .
$
Our proof uses the singular vectors constructed in this paper.
\end{remark}

\subsection{Projection of singular vectors in  $R_{W^k(\g, f_{sreg})}$}
 \label{subsec:reduced-ideal-symbols}

Let
\[
J=\langle v,\sigma(v),\sigma^2(v)\rangle \subset V^k(\g),
\qquad
\W:=W^k(D_4,f_{\sreg}).
\]
By exactness of Drinfeld--Sokolov reduction (cf. \cite{Ar3}), the ideal $J$ maps to an ideal
\[
I:=H^0_{\DS,f_{\sreg}}(J)\subset \W.
\]
\begin{proposition}\label{map-singular}
The ideal $I$ contains the whole conformal-weight two
subspace of $\W$.
\end{proposition}

\begin{proof}
 A computation using {\it Mathematica} and the expressions for
$v''$, $\sigma(v'')$ and $\sigma^2(v'')$ shows that in $S(\g)/J_\chi$ we have
\[
v''\equiv P_1,\qquad \sigma(v'')\equiv P_2,\qquad
\sigma^2(v'')\equiv P_3, 
\]
where $P_1, P_2, P_3 \in S(\g)$ are given by the following formulas, with $\nu=373248$:
\begingroup
\small
\setlength{\jot}{2pt}
\begin{align*}
P_1/\nu={}&
\frac{2}{9}\,\ea{\varepsilon_2-\varepsilon_3}^2
-\fa{\varepsilon_1-\varepsilon_2}+3\fa{\varepsilon_1-\varepsilon_3}
-\frac32\ea{\varepsilon_2-\varepsilon_3}\fa{\varepsilon_2-\varepsilon_3}
-3\fa{\varepsilon_2-\varepsilon_4} \\
&+6\fa{\varepsilon_2+\varepsilon_4}+3\fa{\varepsilon_3+\varepsilon_4}
-\frac14\hh{1}^2+\frac43\ea{\varepsilon_2-\varepsilon_3}\hh{2}
-3\fa{\varepsilon_2-\varepsilon_3}\hh{2}
-\frac12\hh{1}\hh{2} \\
&+\frac32\hh{2}^2+\frac13\ea{\varepsilon_2-\varepsilon_3}\hh{3}
-\frac14\hh{1}\hh{3}+\frac12\hh{2}\hh{3}
+\frac23\ea{\varepsilon_2-\varepsilon_3}\hh{4} \\
&-3\fa{\varepsilon_2-\varepsilon_3}\hh{4}
-\frac14\hh{1}\hh{4}+\frac32\hh{2}\hh{4}+\frac14\hh{3}\hh{4},\\[4pt]
P_2/\nu={}&
-\frac{4}{9}\,\ea{\varepsilon_2-\varepsilon_3}^2
+3\fa{\varepsilon_1-\varepsilon_2}-3\fa{\varepsilon_1-\varepsilon_3}
+\frac32\ea{\varepsilon_2-\varepsilon_3}\fa{\varepsilon_2-\varepsilon_3}
+3\fa{\varepsilon_2-\varepsilon_4} \\
&+6\fa{\varepsilon_2+\varepsilon_4}+2\fa{\varepsilon_3-\varepsilon_4}
-3\fa{\varepsilon_3+\varepsilon_4}
-\frac23\ea{\varepsilon_2-\varepsilon_3}\hh{1}
+\frac43\ea{\varepsilon_2-\varepsilon_3}\hh{2} \\
&-3\fa{\varepsilon_2-\varepsilon_3}\hh{2}
+\frac12\hh{1}\hh{2}-\frac32\hh{2}^2
-\frac14\hh{1}\hh{3}-\frac12\hh{2}\hh{3}-\frac14\hh{3}^2 \\
&+\frac23\ea{\varepsilon_2-\varepsilon_3}\hh{4}
-3\fa{\varepsilon_2-\varepsilon_3}\hh{4}
+\frac14\hh{1}\hh{4}-\frac32\hh{2}\hh{4}-\frac14\hh{3}\hh{4},\\[4pt]
P_3/\nu={}&
-\frac{2}{9}\,\ea{\varepsilon_2-\varepsilon_3}^2
+5\fa{\varepsilon_1-\varepsilon_2}+3\fa{\varepsilon_1-\varepsilon_3}
-\frac12\ea{\varepsilon_2-\varepsilon_3}\fa{\varepsilon_2-\varepsilon_3}
+3\fa{\varepsilon_2-\varepsilon_4} \\
&+3\fa{\varepsilon_2+\varepsilon_4}-4\fa{\varepsilon_3-\varepsilon_4}
+\fa{\varepsilon_3+\varepsilon_4}
-\frac23\ea{\varepsilon_2-\varepsilon_3}\hh{1}
-\frac12\hh{1}\hh{2} \\
&-\frac12\hh{2}^2+\frac13\ea{\varepsilon_2-\varepsilon_3}\hh{3}
+\frac14\hh{1}\hh{3}-\frac12\hh{2}\hh{3}
-\frac14\hh{1}\hh{4} \\
&-\frac12\hh{2}\hh{4}-\frac14\hh{3}\hh{4}-\frac14\hh{4}^2.
\end{align*}
\endgroup

Let $w_1,w_2,w_3\in I$ denote the images of
$v$, $\sigma(v)$ and $\sigma^2(v)$ under the Drinfeld--Sokolov reduction.
The three singular vectors in $V^k(\g)$ have conformal weight $6$ and have
  $\g$--highest weights $2\omega_1$, $2\omega_3$ and $2\omega_4$. Since
\[
(2\omega_1)(x_0)=(2\omega_3)(x_0)=(2\omega_4)(x_0)=4,
\]
their images in $W^k(\g,f_{\sreg})$ have conformal weight $6-4=2$.
Thus
\[
w_1,w_2,w_3\in I\cap \W_2.
\]

We shall identify these three elements in the Zhu $C_2$--algebra of $\W$.
We use the realization of the classical Drinfeld--Sokolov reduction
as the coordinate ring of the Slodowy slice (cf. 
\cite{DSK,Moreau-AV,Ar2}):
\[
R_{\W}\cong (S(\g)/J_\chi)^M\cong \C[S_{f_{\sreg}}].
\]
By a direct calculation,
$
(\g^{e_{\sreg}})_1=\operatorname{span}\{b_1,b_2,b_3\},
$
where
\[
b_1=-2\ea{\varepsilon_1-\varepsilon_2}
-\frac12\ea{\varepsilon_1-\varepsilon_3}
-\frac12\ea{\varepsilon_2-\varepsilon_4}
+\ea{\varepsilon_2+\varepsilon_4},
\]
\[
b_2=\ea{\varepsilon_1-\varepsilon_2}
+\frac12\ea{\varepsilon_1-\varepsilon_3}
-\frac12\ea{\varepsilon_2-\varepsilon_4}
+\ea{\varepsilon_3-\varepsilon_4}, \ 
b_3=-\ea{\varepsilon_1-\varepsilon_2}+\ea{\varepsilon_3+\varepsilon_4}.
\]
Let $x,y,z$ be the coordinate functions dual to the basis $b_1,b_2,b_3$;
thus a point of the Slodowy slice is written as
\[
s=f_{\sreg}+x(s) b_1+y(s) b_2+z(s) b_3+\cdots,
\]
where the dots denote the remaining homogeneous components of $\g^{e_{\sreg}}$.
Using the normalization $(\ea{\alpha}\mid \fa{\alpha})=1$, restriction to
$S_{f_{\sreg}}$ gives
$$
\fa{\varepsilon_1-\varepsilon_2}\mapsto -2x+y-z,
\qquad
\fa{\varepsilon_1-\varepsilon_3}\mapsto \frac{-x+y}{2}, \qquad 
\fa{\varepsilon_2-\varepsilon_4}\mapsto \frac{-x-y}{2},
$$
$$
\fa{\varepsilon_3-\varepsilon_4}\mapsto y, \qquad 
\fa{\varepsilon_2+\varepsilon_4}\mapsto x,
\qquad
\fa{\varepsilon_3+\varepsilon_4}\mapsto z.
$$
The functions $\hh{i}$, $\ea{\varepsilon_2-\varepsilon_3}$ and
$\fa{\varepsilon_2-\varepsilon_3}$ restrict to zero.
Therefore the images of $w_1,w_2,w_3$ in $R_{\W}\cong\C[S_{f_{\sreg}}]$ are
\[
\overline{w}_1=2\nu(4x+y+2z), \ 
\overline{w}_2=2\nu(y-3z), \ 
\overline{w}_3=\nu(-10x+y-4z).
\]

By Proposition~\ref{prop:structure}, we can choose  linearly independent conformal-weight two generators $G_1,G_2,G_3$ of $\W$ such that
their images in $R_{\W}\cong\C[S_{f_{\sreg}}]$ are
$
\overline{G}_1=x, \ 
\overline{G}_2=y, \
\overline{G}_3=z.
$

Since $\W$ has no conformal-weight one subspace, $C_2(\W)\cap \W_2=0$.
Hence the natural map $\W_2\to (R_{\W})_2$ is injective, and therefore in $\W_2$
we have
\[
w_1=2\nu(4G_1+G_2+2G_3), \ 
w_2=2\nu(G_2-3G_3), \ 
w_3=\nu(-10G_1+G_2-4G_3).
\]
This easily implies that   $w_1,w_2,w_3$ are linearly independent in
$\W_2$. By Proposition~\ref{prop:structure}, $\dim \W_2=3$, and therefore
$
I\cap \W_2=\W_2$. 
\end{proof}

\begin{remark}
Using the formula for the Virasoro field of affine \(W\)-algebras
\[
L=-\frac{1}{k+h^\vee}J^{\{f_{\sreg}\}}
\]
(cf. \cite[Theorem 3.2]{KW04}), one easily obtains
\[
L=\frac32 (G_1- G_2-2G_3).
\]
\end{remark}
 
\subsection{ Proof of Theorem \ref{associated-variety}}

 We shall first prove that $k$ is a collapsing level and
$W_k(\g,f_{\sreg}) \cong {\C}{\bf 1}$.
Set
$
\W:=W^k(\g ,f_{\sreg}),
I:=H^0_{\DS,f_{\sreg}}(J)$. 
By exactness of Drinfeld--Sokolov reduction (cf. \cite{Ar3})
\[
H^0_{\DS,f_{\sreg}}(L_k(\g))\cong \W/I.
\]

By Proposition~\ref{map-singular}, we have $I\cap \W_2=\W_2$.  Hence the
quotient $\W/I$ annihilates all generators of conformal weight $2$.  In particular, the
Virasoro vector $L$ belongs to $I$.

Let $X$ be any strong generator of positive conformal weight $d>0$. In the quotient
$\W/I$ one has $L =0$, hence
$
0= L(0) X=d\,X.
$
Since $d>0$, it follows that $X=0$ in $\W/I$.

Thus every strong generator of positive conformal weight vanishes in the quotient. The
quotient is generated by the vacuum vector alone, and therefore
$
\W/I\cong \C$. 
This proves  the assertion (1) of Theorem.


It remains to discuss the associated variety.   Applying (\ref{property-Arakawa}) to
$
V=L_k(\g)$, $ f=f_{\sreg},
$
and using the assertion (1), we obtain
$
f_{\sreg}\in X_{L_k(\g)}.
$
Moreover, since the reduction is one-dimensional, its associated variety is a point:
\[
X_{L_k(\g)}\cap \Sf=\{f_{\sreg}\}.
\]

On the other hand, by Theorem~\ref{quasi-lisse}, the associated variety
$X_{L_k(\g)}$ is contained in the nilpotent cone of $\g$. It is well known that for a
simple affine vertex algebra the associated variety is a closed $G$-stable conic subvariety
of $\g$ (see, for instance, \cite{Moreau-AV,ArMo-sheets}). Hence $X_{L_k(\g)}$ is a closed
$G$-stable nilpotent subvariety of $\g$.

Since $f_{\sreg}\in X_{L_k(\g)}$, we have
\[
\overline{{\Bbb O}}_{sreg}\subset X_{L_k(\g)}.
\]
Assume that the inclusion is strict. Since ${\Bbb O}_{sreg}$ is the subregular nilpotent
orbit in type $D_4$, the only nilpotent orbit strictly larger than ${\Bbb O}_{sreg}$ is the
regular orbit ${\Bbb O}_{reg}$. Hence
$
{\Bbb O}_{reg}\subset X_{L_k(\g)}.
$
By the standard dimension formula for Slodowy slices (see \cite{Slodowy}),
\[
\dim\bigl({\Bbb O}_{reg}\cap \Sf\bigr)=\dim {\Bbb O}_{reg}-\dim {\Bbb O}_{sreg}=2.
\]
Therefore \(X_{L_k(\g)}\cap \Sf\) would be positive-dimensional, contradicting
$
X_{L_k(\g)}\cap \Sf=\{f_{\sreg}\}$. 
This contradiction shows that
$
X_{L_k(\g)}=\overline{{\Bbb O}}_{sreg}.
$
This proves Theorem~\ref{associated-variety}.

 \section{Computational methods}

In the calculations   in this paper we use explicit symbolic computations  in 
\textsc{Mathematica}.

The first  and most important step is the construction of a singular vector
$
v\in V^{-14/3}(\g)
$
of conformal weight $6$ and  highest weight $2\omega_1$ with respect to $\g$. This is done by writing a
general vector in the relevant PBW weight space and imposing the singular vector condition. 
 The resulting linear system is solved in \textsc{Mathematica}. 
 The other two singular vectors
$
\sigma(v)$, $\sigma^2(v)
$
are then obtained by applying the order three Dynkin-diagram automorphism.

For classification of irreducible modules for $L_{-14/3}(\g)$  we  apply  Zhu's algebra theory.  Under the  isomorphism 
$
A\!\left(V^k(\g)\right)\cong U(\g),
$
the singular vectors map to explicit elements of the universal enveloping algebra. From
their adjoint $\g$-submodules we extract polynomial relations in the Cartan variables,
which are then solved in \textsc{Mathematica} in order to determine the irreducible
highest weights in the category $\mathcal O$.

To study the associated variety, we work in Zhu's $C_2$-algebra
$
R_{V^k(\g)}\cong S(\g).
$
The singular vectors are mapped  to  elements
$
v''$, $\sigma(v'')$, $\sigma^2(v'')$ 
in the symmetric algebra. Applying the Chevalley projection produces polynomial relations
on the Cartan subalgebra. These computations, again carried out in \textsc{Mathematica},
show that the associated variety contains no nonzero semisimple element, and therefore is
contained in the nilpotent cone.

Finally, for the subregular Drinfeld--Sokolov reduction, we fix a subregular
$\mathfrak{sl}_2$-triple \\
$
(e_{sreg},h_{sreg},f_{sreg})
$
and the corresponding character
$
\chi(x)=-(f_{sreg}\mid x), x\in \g_{>0}.
$
We then  calculate the images of vectors 
$
v'',\sigma(v''), \sigma^2(v'')
$
in the  $S(\g) \ (\mbox{mod} \ J_{\chi})$. This produces three non zero 
polynomials
$
P_1, P_2,  P_3$, 
whose coefficients are computed in \textsc{Mathematica}.  Next we restrict these polynomials to the Slodowy slice $S_{f_{sreg}}$, which enables us to identify the images of the singular vectors in $V^k(\g)$ with generators of conformal weight $2$ of the affine $W$-algebra $W^k(\g,f_{sreg})$.

\end{document}